\begin{document}

\newcommand{\ds}{\displaystyle} 

\newcommand{\R}{{\mathbb R}}
\newcommand{\N}{{\mathbb N}}
\newcommand{\C}{{\mathbb C}}
\newcommand{\Q}{{\mathbb Q}}
\newcommand{\Z}{{\mathbb Z}}
\newcommand{\Rn}{{\mathbb R}^n}
\newcommand{\M}{{ \mathcal H}}

\newcommand{\1}{{{\mathbf 1}}}

\newcommand{\SX}{{ l^1S}}
\renewcommand{\H}{\dot{H}}
\newcommand{\tS}{\tilde{S}}
\newcommand{\tT}{\tilde{T}}
\newcommand{\tW}{\tilde{W}}
\newcommand{\tX}{\tilde{X}}
\newcommand{\tK}{\tilde{K}}

\newcommand{\tx}{\tilde{x}}
\newcommand{\txi}{\tilde{\xi}}

\newcommand{\tA}{\tilde{A}}
\newcommand{\tI}{{\tilde I}}
\newcommand{\tP}{\tilde{P}}

\newcommand{\la}{{\langle}}
\newcommand{\ra}{{\rangle}}

\newcommand{\supp}{{\text{supp }}}

\newcommand{\bc}{\begin{2}\begin{com}}
\newcommand{\ec}{\end{com}\end{2}}

\renewcommand{\varepsilon}{\epsilon}
\renewcommand{\div}{{\text{div}\,}}

\newcommand{\ext}{{\R^n\backslash\Omega}}
\newcommand{\bdy}{{\partial\Omega}}

\newtheorem{theorem}{Theorem}
\newtheorem{lemma}[theorem]{Lemma}
\newtheorem{proposition}[theorem]{Proposition}
\newtheorem{corollary}[theorem]{Corollary}
\newtheorem{definition}[theorem]{Definition}
\newtheorem{remark}[theorem]{Remark}
\newtheorem{assumption}[theorem]{Assumption}

\newcommand{\bb}{\beta}
\newcommand{\e}{\epsilon}
\newcommand{\oo}{\omega}
\newcommand{\dd}{\delta}
\renewcommand{\d}{{\partial}}
\renewcommand{\l}{\lambda}
\newcommand{\p}{{\partial}}

\newcommand{\ang}{{\not\negmedspace\nabla}}

\newenvironment{com}{\begin{quotation}{\leftmargin .25in\rightmargin .25in}\sffamily \footnotesize $\clubsuit$}
               {$\spadesuit$\end{quotation}\par\bigskip}
\newenvironment{com2}{\sffamily\footnotesize $\clubsuit$ }{ $\spadesuit$}

\bibliographystyle{plain}

\title{Decay estimates for variable coefficient wave equations
in exterior domains}

\author[J. Metcalfe]
{Jason Metcalfe}

\address{Department of Mathematics, University of North Carolina,
  Chapel Hill, NC  27599-3250, USA}
\email{metcalfe@email.unc.edu}

\author[D. Tataru]
{Daniel Tataru}

\address{Mathematics Department, University of California \\
Berkeley, CA 94720-3840, USA}
\email{tataru@math.berkeley.edu}

\thanks{ 
The work of the first author was supported in part by NSF grant DMS0800678.
The work of
the second author was supported in part by
NSF grants DMS0354539 and DMS0301122.}

\baselineskip 18pt

\begin{abstract}
  In this article we consider variable coefficient, time dependent
  wave equations in exterior domains $\R \times (\R^n \setminus
  \Omega)$, $n\geq 3$.  We prove localized energy estimates if
  $\Omega$ is star-shaped, and global in time Strichartz estimates if
  $\Omega$ is strictly convex.
\end{abstract}

\includeversion{jm}

\maketitle

\section{Introduction}

Our goal, in this article, is to prove analogs of the well known
Strichartz estimates and localized energy estimates for variable
coefficient wave equations in exterior domains.  We consider
long-range perturbations of the flat metric, and we take the obstacle
to be star-shaped.  The localized energy estimates are obtained under
a smallness assumption for the long range perturbation.
Global-in-time Strichartz estimates are then proved assuming the
local-in-time Strichartz estimates, which are known to hold for
strictly convex obstacles.

For the constant coefficient wave equation $\Box =
\partial_t^2-\Delta$ in $\R\times\R^n$, $n\ge 2$, we have that
solutions to the Cauchy problem
\begin{equation}
\Box u = f,\quad u(0)=u_0,\quad \partial_t u(0)=u_1,
\label{cccp}
\end{equation}
satisfy the Strichartz estimates\footnote{Here and throughout, we
  shall use $\nabla$ to denote a space-time gradient unless otherwise
  specified with subscripts.}
\[
\||D_x|^{-\rho_1} \nabla u\|_{L^{p_1}L^{q_1}}\lesssim \|\nabla
u(0)\|_{L^2} + \||D_x|^{\rho_2} \Box
u\|_{L^{p_2'}L^{q_2'}},
\]
for 
Strichartz admissible exponents $(\rho_1,p_1,q_1)$ and
$(\rho_2,p_2,q_2)$.  Here, exponents $(\rho,p,q)$ are called
Strichartz admissible if $2\le p,q\le \infty$,
\[ \rho = \frac{n}{2}-\frac{n}{q}-\frac{1}{p},\quad \frac{2}{p}\le \frac{n-1}{2}\Bigl(1-\frac{2}{q}\Bigr),\]
and $(\rho,p,q)\neq (1,2,\infty)$ when $n=3$.  

The Strichartz estimates follow via a $TT^*$ argument and the
Hardy-Littlewood-Sobolev inequality from the dispersive estimates,
\[
\| |D_x|^{-\frac{n+1}2 (1-\frac2q)} \nabla u(t)\|_{L^{q}} \lesssim
 t^{-\frac{n-1}2 (1-\frac2q)} \|u_1\|_{L^{q'}}, \qquad 2 \leq q < \infty
\]
for solutions to \eqref{cccp} with $u_0 = 0$, $f=0$. This in turn is obtained
by interpolating between a $L^2 \to L^2$ energy estimate and an
$L^1\to L^\infty$ dispersive bound which provides $O(t^{-(n-1)/2})$
type decay.  Estimates of this form originated in the work \cite{Str},
and as stated are the culmination of several subsequent works.  The
endpoint estimate $(p,q)=\Bigl(2,\frac{2(n-1)}{n-3}\Bigr)$ was most
recently obtained in \cite{MR1646048}, and we refer the interested
reader to the references therein for a more complete history.

The second estimate which shall be explored is the localized energy
estimate, a version of which states
\begin{equation}\label{kss}
  \sup_j \|\la x\ra^{-1/2} \nabla u\|_{L^2(\R\times \{|x|\in
    [2^{j-1},2^j]\})}
  \lesssim \|\nabla u(0)\|_{L^2} + \sum_k \|\la x\ra^{1/2} \Box
  u\|_{L^2(\R\times \{|x|\in [2^{k-1},2^k]\})}\end{equation}
in the constant coefficient case.  These estimates can be proved using
a positive commutator argument with a multiplier which is roughly of
the form $f(r)\partial_r$ when $n\ge 3$ and are quite akin to the
bounds found in, e.g., \cite{morawetz}, \cite{Strauss},
\cite{MR1308623}, \cite{smithsogge}, 
\cite{KSS}, and \cite{MR2128434}.  See also \cite{Alinhac},
\cite{MetSo}, \cite{globalw} for certain estimates for small 
perturbations of the
d'Alembertian.

Variants of these estimates for constant coefficient wave equations
are also known in exterior domains.  Here, $u$ is replaced by a
solution to
\[\Box u = F,\quad u|_{\partial \Omega}=0,\quad u(0)=u_0,\quad
\partial_t u(0)=u_1,\quad (t,x)\in \R\times \ext\] where $\Omega$ is a
bounded set with smooth boundary.  The localized energy estimates have
played a key role in proving a number of long time existence results
for nonlinear wave equations in exterior domains.  See, e.g.,
\cite{KSS} and \cite{MetSo2, MetSo} for their proof and application.
Here, it is convenient to assume that the obstacle $\Omega$ is
star-shaped, though certain estimates are known (see e.g.
\cite{MetSo2}, \cite{BurqGlobal}) in more general settings.  Exterior
to star-shaped obstacles, the estimates for small perturbations of
$\Box$ continue to hold (see \cite{MetSo2}).  This, however, only
works for $n\ge 3$, and the bound which results is not strong enough
in order to prove the Strichartz estimates which we desire.  As such,
we shall, in the sequel, couple this bound with certain frequency
localized versions of the estimate in order to prove the Strichartz
estimates.  For time independent perturbations, one may permit more
general geometries.  See, e.g., \cite{BurqGlobal}.

Certain global-in-time Strichartz estimates are also known in exterior
domains, but, except for certain very special cases (see \cite{BLP},
\cite{bss}, which are closely based on \cite{SmSo}), require that the
obstacle be strictly convex.  Local-in-time estimates were shown in
\cite{SmSoLocal} for convex obstacles, and using these estimates,
global estimates were constructed in \cite{smithsogge} for $n$ odd and
\cite{BurqGlobal} and \cite{MetGlobal} for general $n$.  See, also,
\cite{hmssz}.

In the present article, we explore variable coefficient cases of these
estimates.  Here, $\Box$ is replaced by the second order hyperbolic
operator
\[
P(t,x,D) = D_i a^{ij}(t,x) D_j+ b^i(t,x) D_i + c(t,x),
\]
where $D_0=D_t$ is understood.  We assume that $(a^{ij})$ has
signature $(n,1)$ and that $a^{00}<0$, i.e. that time slices are
space-like.  We shall then consider the initial value boundary value
problem
\begin{equation}
  \label{maineq}
Pu=f,\quad u|_{\partial\Omega}=0,\quad u(0)=u_0,\quad \partial_t
u(0)=u_1,\quad (t,x)\in \R\times \ext.
\end{equation}

When $\Omega=\emptyset$ and $b^i \equiv c\equiv 0$, the problem of
proving Strichatz estimates is understood locally, and of course,
localized energy estimates are trivial locally-in-time.  For smooth
coefficients, Strichartz estimates were first proved in
\cite{MR1168960} using Fourier integral operators.  Using a wave
packet decomposition, Strichartz estimates were obtained in
\cite{MR1644105} for $C^{1,1}$ coefficients in spatial dimensions
$n=2,3$.  Using instead an approach based on the FBI transform, these
estimates were extended to all dimensions in \cite{nlw, cs, lp}.  For
rougher coefficients, the Strichartz estimates as stated above are
lost (see \cite{SS}, \cite{MR1909638}) and only certain estimates with
losses are available \cite{cs, lp}.  When the boundary is nonempty,
far less is known, and we can only refer to the results of
\cite{SmSoLocal} for smooth time independent coefficients, $b^i\equiv
c\equiv 0$, and $\Omega$ strictly geodesically convex.  The proof of
these estimates is quite involved and uses a Melrose-Taylor parametrix
to approximate the reflected solution.

For the boundaryless problem, global-in-time localized energy
estimates and Strichartz estimates were recently shown in
\cite{globalw} for small, $C^2$, long-range perturbations.  The former
follow from a positive commutator argument with a multiplier which is
akin to what we present in the sequel.  For the latter, an outgoing
parametrix is constructed using a time-dependent FBI transform in a
fashion which is reminiscent to that of the preceding work \cite{gS}
on Schr\"odinger equations.  Upon conjugating the half-wave equation
by the FBI transform, one obtains a degenerate parabolic equation due
to a nontrivial second order term in the asymptotic expansion.  Here,
the bounds from \cite{gS}, which are based on the maximum principle,
may be cited.  The errors in this parametrix construction are small in
the localized energy spaces, which again are similar to those below,
and it is shown that the global Strichartz estimates follow from the
localized energy estimates.

The aim of the present article is to combine the approach of
\cite{globalw} with analogs of those from \cite{smithsogge},
\cite{BurqGlobal}, and \cite{MetGlobal} to show that global-in-time
Strichartz estimates in exterior domains follow from the localized
energy estimates and local-in-time Strichartz estimates for the
boundary value problem.  As we shall show the localized energy
estimates for small perturbations outside of star-shaped obstacles,
the global Strichartz estimates shall then follow for convex obstacles
from the estimates of \cite{SmSoLocal}.
 
Let us now more precisely describe our assumptions.  We shall look at
certain long range perturbations of Minkowski space.  To state this,
we set
\[D_0=\{|x|\le 2\},\quad D_j = \{2^j\le |x|\le 2^{j+1}\},\quad
j=1,2,\dots\]
and
\[
A_j = \R \times D_j, \qquad A_{<j} =  \R \times \{|x| \leq
2^{j}\}.
\]
We shall then assume that
\begin{equation}
  \label{coeff}
  \sum_{j \in \N} \sup_{A_j\cap (\R\times\ext)} \la x\ra^2 |\nabla^2 a(t,x)| + \la x\ra |\nabla a(t,x)|
  + |a(t,x)-I_n| \leq \e
\end{equation}
and, for the lower order terms, 
\begin{equation}
  \label{coeffb}
  \sum_{j \in \N} \sup_{A_j\cap (\R\times\ext)}  \la x\ra^2 |\nabla b(t,x)|
  + \la x\ra |b(t,x)| \leq \e
\end{equation}
\begin{equation}
  \label{coeffcc}
 \sum_{j \in \N} \sup_{A_j\cap (\R\times\ext)}  \la x\ra^2 |c(t,x)|
 \leq \e.
\end{equation}
If $\e$ is small enough then \eqref{coeff} precludes the existence of trapped
rays, while for arbitrary $\e$ it restricts the trapped rays to
finitely many dyadic regions. 

We now define the localized energy spaces that we shall use.  
We begin with an initial choice which is convenient for 
the local energy estimates but not so much for the Strichartz
estimates. Precisely, we define the localized energy space $LE_0$ as
 \[
\|\phi\|_{LE_{0}} = \sup_{j\ge 0}\Bigl(2^{-j/2} \|\nabla
 \phi\|_{L^2(A_j\cap (\R\times\ext))} + 2^{-3j/2}
 \|\phi\|_{L^2(A_j\cap(\R\times\ext))}\Bigr),
\]
while for the forcing term we set
 \[
\|f\|_{LE_0^*} = \sum_{k\ge 0} 2^{k/2} \|f\|_{L^2(A_k\cap
  (\R\times\ext))}.
\]
The local energy bounds in these spaces shall  follow from the 
arguments in \cite{MetSo}.

On the other hand, for the Strichartz estimates, we shall introduce
frequency localized spaces as in \cite{globalw}, as well as the
earlier work \cite{gS}.  We use a Littlewood-Paley decomposition in
frequency,
\[ 1= \sum_{k=-\infty}^\infty S_k(D), \quad \text{supp
}s_k(\xi)\subset \{2^{k-1}<|\xi|<2^{k+1}\}\]
and for each $k\in \Z$, we use
\[\|\phi\|_{X_k}=2^{-k^-/2} \|\phi\|_{L^2(A_{<k^-})} + \sup_{j\ge k^-}
\||x|^{-1/2}\phi\|_{L^2(A_j)}\]
to measure functions of frequency $2^k$.  Here $k^-=\frac{|k|-k}{2}$.  We then define the global
norm
\[
\|\phi\|^2_X=\sum_{k=-\infty}^\infty \|S_k \phi\|^2_{X_k}.
\]
Then for the local energy norm we use
\[
\|\phi\|^2_{LE_\infty} = \|\nabla \phi\|^2_X. 
\]
For the inhomogeneous term  we introduce the 
dual space $Y=X'$ with norm defined by
\[
\|f\|_Y^2=\sum_{k=-\infty}^\infty \|S_k f\|_{X_k'}^2.
\] 

To relate these spaces to the $LE_0$ respectively $LE_0^*$ 
we use Hardy type inequalities which are summarized 
in the following proposition:

\begin{proposition}\label{phardy}
  We have
  \begin{equation}
    \label{Hardy} \sup_j \||x|^{-1/2} u\|_{L^2(A_j)}\lesssim \|u\|_X
  \end{equation}
and
\begin{equation}\label{reverseHardy}
  \|u\|_Y\lesssim \sum_j \||x|^{1/2} u\|_{L^2(A_j)}.
\end{equation}
In addition, 
\begin{equation}\label{hardy.ge4}
\||x|^{-3/2} \phi\|_{L^2}\lesssim \|\nabla_x \phi\|_X,\quad n\ge
  4.\end{equation}
\end{proposition}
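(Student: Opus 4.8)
The plan is to establish each of the three inequalities by unwinding the definition of the $X_k$ norms and summing dyadically, exploiting the way the weight $2^{k^-}$ interpolates between the low-frequency $L^2$ piece and the high-frequency weighted piece. For \eqref{Hardy}, fix a dyadic annulus $A_j$ with $j\ge 0$. Decompose $u=\sum_k S_k u$ and split the sum at $k=j$. For $k\le j$ (in particular $k^-\le \max(0,-k)\le$ something comparable to $j$ in the relevant range) the term $\||x|^{-1/2}S_k u\|_{L^2(A_j)}$ is controlled directly by $\sup_{j'\ge k^-}\||x|^{-1/2}S_k u\|_{L^2(A_{j'})}\le \|S_k u\|_{X_k}$ when $j\ge k^-$, while for the genuinely low frequencies where $j<k^-$ one instead uses $\||x|^{-1/2}S_k u\|_{L^2(A_j)}\lesssim 2^{-j/2}\|S_k u\|_{L^2(A_j)}\lesssim 2^{-j/2}\|S_k u\|_{L^2(A_{<k^-})}$ and pays $2^{(k^- - j)/2}$ against the normalization $2^{-k^-/2}$, giving a geometric series in $j-k^-$. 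For $k> j$ one uses Bernstein together with the almost-orthogonality of the $S_k$ on the fixed annulus $A_j$; the high-frequency tail is summable because the $X_k$ norm of $S_k u$ at scale $2^{-k}\ll 2^j$ controls an $L^2(A_j)$ bound with a favorable power of $2^{j-k}$. Collecting, $\||x|^{-1/2}u\|_{L^2(A_j)}\lesssim \sum_k c_{jk}\|S_k u\|_{X_k}$ with $\sum_k c_{jk}\lesssim 1$ uniformly in $j$, and then Cauchy–Schwarz in $k$ against $\sum_k\|S_k u\|_{X_k}^2=\|u\|_X^2$ finishes it.

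The estimate \eqref{reverseHardy} is the dual statement and I would simply dualize: by definition $\|f\|_Y^2=\sum_k\|S_k f\|_{X_k'}^2$, so it suffices to show $\sum_k\|S_k f\|_{X_k'}^2\lesssim\bigl(\sum_j\||x|^{1/2}f\|_{L^2(A_j)}\bigr)^2$. Testing $S_k f$ against $\phi\in X_k$ and using that $S_k$ is bounded on each $X_{k'}$-type quantity (again by Littlewood–Paley localization and the kernel decay of $S_k$), one transfers the pairing to $\la f,\tilde S_k\phi\ra$ and estimates $|\la f,\psi\ra|\le \sum_j\||x|^{1/2}f\|_{L^2(A_j)}\||x|^{-1/2}\psi\|_{L^2(A_j)}\le\bigl(\sum_j\||x|^{1/2}f\|_{L^2(A_j)}\bigr)\|\psi\|_X$ by \eqref{Hardy}. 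Summing the squares over $k$ and using almost-orthogonality of the $S_k$ (so that $\sum_k\|\tilde S_k\phi_k\|_X^2\lesssim\sum_k\|\phi_k\|_{X_k}^2$ for the relevant test functions) yields the claim; this is really just \eqref{Hardy} read through the $TT^*$/duality dictionary, so no new difficulty arises.

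For \eqref{hardy.ge4}, when $n\ge 4$ the classical Hardy inequality gives $\||x|^{-1}\psi\|_{L^2(\R^n)}\lesssim\|\nabla_x\psi\|_{L^2(\R^n)}$ with room to spare, and one wants the stronger weight $|x|^{-3/2}$ matched against the weaker, $X$-localized, norm of $\nabla_x\phi$. I would apply \eqref{Hardy} to $u=|x|^{-1}\phi$ after frequency-localizing: $\||x|^{-3/2}\phi\|_{L^2}^2=\sum_j\||x|^{-1/2}(|x|^{-1}\phi)\|_{L^2(A_j)}^2$, and on each annulus $\||x|^{-1}\phi\|_{L^2(A_j)}\lesssim 2^{-j}\|\phi\|_{L^2(A_j)}$; a Poincaré/Hardy inequality on the annulus (valid for $n\ge 3$, with the borderline at $n=3$ costing a logarithm that is exactly what forces the restriction $n\ge 4$ once one sums in $j$) replaces $2^{-j}\|\phi\|_{L^2(A_j)}$ by $\|\nabla_x\phi\|_{L^2(\text{neighborhood of }A_j)}$, and then the $X$ norm of $\nabla_x\phi$ dominates $\sup_j\||x|^{-1/2}\nabla_x\phi\|_{L^2(A_j)}$ by \eqref{Hardy} again. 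The one point requiring care — and the main obstacle — is the bookkeeping at low frequencies and near $|x|\sim 1$, i.e. making the annular Hardy inequalities and the frequency truncation $2^{k^-}$ cooperate so that all the dyadic sums in $j$ and $k$ genuinely converge; in dimension $n=3$ this bookkeeping fails (the annular Hardy constant degenerates), which is precisely why \eqref{hardy.ge4} is stated only for $n\ge 4$.
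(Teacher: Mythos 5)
The paper does not actually prove this proposition; it cites \cite[Lemma 1]{globalw} for \eqref{Hardy} and \eqref{hardy.ge4} and disposes of \eqref{reverseHardy} as ``its dual.'' Your attempt therefore has to stand on its own, and as written it has genuine gaps at exactly the two places where the frequency localization must do real work. First, in \eqref{Hardy} your low-frequency step loses rather than gains: from $\||x|^{-1/2}S_k u\|_{L^2(A_j)}\lesssim 2^{-j/2}\|S_k u\|_{L^2(A_{<k^-})}=2^{(k^--j)/2}\cdot 2^{-k^-/2}\|S_k u\|_{L^2(A_{<k^-})}$ the factor $2^{(k^--j)/2}$ is \emph{large} precisely in the regime $j<k^-$, so your ``geometric series in $j-k^-$'' diverges and the sum over $k<-j$ cannot be closed by Cauchy--Schwarz. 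What saves the estimate is Bernstein/local constancy at scale $2^{-k}$: a function $v$ at spatial frequency $\sim 2^k$ satisfies $\|v\|_{L^2(\R\times\{|x|\le 2^j\})}\lesssim (2^{j+k})^{n/2}\|v\|_{L^2(A_{<k^-})}+(\text{rapidly decaying tails})$, and since $2^{j+k}\le 1$ here, $(2^{j+k})^{n/2}$ beats the loss $2^{(k^--j)/2}$. Second, the recombination $\||x|^{-1/2}u\|_{L^2(A_j)}\lesssim\sum_k c_{jk}\|S_k u\|_{X_k}$ with $\sum_k c_{jk}\lesssim 1$ cannot hold: for every $k$ with $k^-\le j$ (infinitely many $k$, in particular all large positive $k$) the best per-frequency constant is $\approx 1$ with no decay whatsoever in $k$ --- the ``favorable power of $2^{j-k}$'' you invoke for $k>j$ does not exist. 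One must square-sum, i.e.\ use almost-orthogonality of the $S_k$ localized to enlarged annuli (splitting the kernels into a local piece plus rapidly decaying tails) to pass from $\sum_k\|S_k u\|^2_{X_k}$ to $\||x|^{-1/2}u\|^2_{L^2(A_j)}$.

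The duality argument for \eqref{reverseHardy} is the right idea, though since $\|f\|_Y^2$ is \emph{defined} as $\sum_k\|S_k f\|^2_{X_k'}$ rather than as the literal dual norm of $X$, the same orthogonality-in-$k$ issue reappears: testing each $S_k f$ separately gives a bound independent of $k$ whose squares do not sum, so you should decompose $f$ over annuli and use $\sum_k\|S_k g\|^2_{L^2}=\|g\|^2_{L^2}$. The argument for \eqref{hardy.ge4} breaks at the final step: after the annular Hardy inequalities you arrive at $\sum_j\||x|^{-1/2}\nabla_x\phi\|^2_{L^2(A_j)}$, and this $\ell^2_j$ sum is not controlled by the $\ell^\infty_j$ quantity $\sup_j\||x|^{-1/2}\nabla_x\phi\|_{L^2(A_j)}$ that \eqref{Hardy} supplies. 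The correct route is frequency by frequency: show $\||x|^{-3/2}S_k\phi\|_{L^2}\lesssim 2^k\|S_k\phi\|_{X_k}\lesssim\|\nabla_x S_k\phi\|_{X_k}$, where the region $j\ge k^-$ contributes a convergent $\sum_j 2^{-2j}$ and the region $j<k^-$ contributes, via the Bernstein gain above, summands of size $2^{j(n-3)}2^{k(n-1)}$; the restriction $n\ge 4$ is exactly the convergence of $\sum_{0\le j\le k^-}2^{j(n-3)}$ (logarithmically divergent when $n=3$), not a degeneration of an annular Poincar\'e constant, which is dimension-independent.
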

The first bound \eqref{Hardy} is a variant of a Hardy inequality, see
\cite[(16), Lemma 1]{globalw}, and also \cite{gS}. The second
\eqref{reverseHardy} is its dual. The bound \eqref{hardy.ge4}, proved
in \cite[Lemma 1]{globalw}, fails in dimension three. 


Now we turn our attention to the obstacle problem. For $R$
fixed so that $\Omega\subset\{|x|<R\}$, we select a smooth cutoff
$\chi$ with $\chi\equiv 1$ for $|x|<2R$ and $\supp\chi\subset\{|x|<4R\}$.
We shall use $\chi$ to partition the analysis into a portion near the
obstacle and a portion away from the obstacle.  In particular, we
define the localized energy space $LE \subset LE_0$ as
\[
\|\phi\|^2_{LE} = \| \phi\|^2_{LE_{0}} +
\|(1-\chi)\phi\|^2_{LE_\infty}.
\]
For the forcing term, we will respectively construct $LE^* \supset
LE_0^*$ by
\[
\|f\|^2_{LE^*} = \|\chi f\|^2_{LE_{0}^*} +
\|(1-\chi)f\|^2_{Y}, \qquad n \geq 4. 
\]
This choice is no longer appropriate in dimension $n=3$, as otherwise
the local $L^2$ control of the solution is lost. Instead we simply set
\[
\|f\|^2_{LE^*} = \| f\|^2_{LE_{0}^*}, \qquad n = 3.
\]

Using these space, we now define what it means for a solution to
satisfy our stronger localized energy estimates.
\begin{definition} 
  We say that the operator $P$ satisfies the localized energy
  estimates if for each initial data $(u_0,u_1) \in \dot H^{1} \times
  L^2$ and each inhomogeneous term $f \in LE^*$,
  there exists a unique solution $u$ to \eqref{maineq}
  with $u  \in LE$ 
which satisfies the bound
\begin{equation}
\|u\|_{LE} + \Bigl\|\frac{\partial u}{\partial
  \nu}\Bigr\|_{L^2(\partial \Omega)} \lesssim
\|\nabla u(0)\|_{L^2} + \|f\|_{LE^*}.
\end{equation}
\end{definition}

We prove that the localized energy estimates hold under the assumption
that $P$ is a small perturbation of the d'Alembertian:

\begin{theorem}\label{l3}
  Let $\Omega$ be a star-shaped domain.  Assume that the coefficients
  $a^{ij}$, $b^i$, and $c$ satisfy \eqref{coeff}, \eqref{coeffb}, and
  \eqref{coeffcc} with an $\e$ which is sufficiently small.  Then the
  operator $P$ satisfies the localized energy estimates
  globally-in-time for $n\ge 3$.
\end{theorem}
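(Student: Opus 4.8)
\emph{Overall plan.} I would reduce the statement to the a priori estimate for suitably regular solutions, from which existence and uniqueness in $LE$ follow by standard means: uniqueness from the energy inequality run forward and backward in time together with finite speed of propagation, and existence by solving on an exhausting sequence of time slabs (or by a duality/Hahn--Banach argument) and passing to the limit with the help of the uniform bound. The a priori estimate itself I would split, via the cutoff $\chi$, into a contribution near the obstacle — controlled in $LE_0$ by a Morawetz multiplier argument exploiting star-shapedness, as in \cite{MetSo} — and a contribution away from the obstacle, where $(1-\chi)u$ solves a \emph{boundaryless} perturbed wave equation and one can run the frequency-localized localized energy estimate of \cite{globalw}. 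The two pieces are then glued using the fact that $[P,\chi]$ is supported in a fixed compact annulus.

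\emph{Near the obstacle.} Here I would pair $Pu$ with a multiplier of the form $g(r)\partial_r u + \tfrac{n-1}{2}\tfrac{g(r)}{r}u$, with $g$ bounded and increasing, $g'(r)\sim\la r\ra^{-1}$, and integrate by parts. The bulk terms of the resulting Morawetz identity, together with a Hardy inequality, recover the full $LE_0$ norm of $u$ on the left; the boundary term on $\bdy$ is, because $\Omega$ is star-shaped, of the correct sign and controls $\|\partial u/\partial\nu\|_{L^2(\bdy)}^2$; on the right one picks up $\|\nabla u(0)\|_{L^2}^2$, the pairing with $f$ (bounded by $\|f\|_{LE_0^*}\|u\|_{LE_0}$), and error terms generated by $a^{ij}-I_n$, $b^i$, $c$. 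By \eqref{coeff}, \eqref{coeffb}, \eqref{coeffcc} these errors are $\lesssim \e\,\|u\|_{LE_0}^2$ and are absorbed into the left side once $\e$ is small. This yields $\|u\|_{LE_0}+\|\partial u/\partial\nu\|_{L^2(\bdy)}\lesssim \|\nabla u(0)\|_{L^2}+\|f\|_{LE_0^*}$.

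\emph{Away from the obstacle.} Put $v=(1-\chi)u$, so that $Pv=(1-\chi)f-[P,\chi]u$ on $\R\times\R^n$, with $[P,\chi]u$ supported in $\{2R\le|x|\le 4R\}$ and involving only $u,\nabla u$ there. For this boundaryless equation I would prove $\|\nabla v\|_X\lesssim\|\nabla v(0)\|_{L^2}+\|Pv\|_Y$ by applying the Littlewood--Paley projections $S_k$ (so $P(S_kv)=S_k(Pv)+[P,S_k]v$) and running, for each $k$, a positive commutator argument with a multiplier adapted to the spatial scale $2^{-k}$, which for $k\ge 0$ produces the $\sup_j\||x|^{-1/2}\cdot\|_{L^2(A_j)}$ part of $X_k$ and for $k<0$ the $2^{-k^-/2}\|\cdot\|_{L^2(A_{<k^-})}$ part. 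The commutators $[P,S_k]v$ and the contributions of $a-I_n$, $b$, $c$ are, by the smallness and dyadic decay in \eqref{coeff}--\eqref{coeffcc} and the rapid decay of the Littlewood--Paley kernels, bounded by $\e$ times a square-summable-in-$k$ multiple of $\|\nabla v\|_X$, hence absorbed after summing in $\ell^2_k$; Proposition \ref{phardy} converts the bulk terms into the $X$ norm and \eqref{reverseHardy} produces the $Y$ norm on the right. Since $[P,\chi]u$ lives on a fixed compact set, $\|[P,\chi]u\|_Y\lesssim\|\nabla u\|_{L^2(\R\times\{2R\le|x|\le4R\})}+\|u\|_{L^2(\cdots)}\lesssim\|u\|_{LE_0}$, which the near-obstacle bound already controls; adding the two estimates closes the argument and gives $\|u\|_{LE}+\|\partial u/\partial\nu\|_{L^2(\bdy)}\lesssim\|\nabla u(0)\|_{L^2}+\|f\|_{LE^*}$ for $n\ge4$. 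For $n=3$ the Hardy inequality \eqref{hardy.ge4} is unavailable, so the $X$-bound on $\nabla u$ no longer recovers $L^2$ control of $u$ itself; this is precisely why the forcing norm is taken to be $LE_0^*$ throughout rather than the near/far hybrid, and I would have to track carefully where this distinction enters the gluing of the transition-annulus terms.

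\emph{Main obstacle.} The heart of the proof is the away-from-obstacle step: proving the frequency-by-frequency $X_k$ estimates for the variable-coefficient operator and, above all, showing that the commutator and coefficient errors are square-summable in $k$ with a constant proportional to $\e$, so that they can be absorbed. This is where the smallness hypothesis and the dyadic decay \eqref{coeff}--\eqref{coeffcc} are genuinely used; by contrast the near-obstacle Morawetz estimate and the Hardy-type inequalities of Proposition \ref{phardy} are essentially imported from \cite{MetSo} and \cite{globalw}, and the gluing is routine once the compact support of $[P,\chi]$ is exploited.
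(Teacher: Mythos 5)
Your overall architecture --- a Morawetz/KSS multiplier near the star-shaped boundary, the frequency-localized boundaryless estimate of \cite{globalw} near infinity, and gluing through the compactly supported commutator $[P,\chi]$ --- is the paper's. Two issues, one minor and one substantive.

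Minor: a single bounded increasing $g$ with $g'(r)\sim\la r\ra^{-1}$ does not exist, since then $g$ would grow like $\log r$. The standard remedy, and the one the paper uses, is the family $f_\rho(r)=r/(r+\rho)$: run the multiplier identity for each fixed $\rho$ and take a supremum over dyadic $\rho$, which is exactly what produces the sup-over-annuli structure of the $LE_0$ norm. Routine fix, but as literally written your multiplier is inconsistent.

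Substantive: your gluing runs in the wrong order for $n\ge 4$, and this is a genuine gap. You first apply the near-obstacle Morawetz estimate to $u$ itself, which puts $\|Pu\|_{LE_0^*}=\|f\|_{LE_0^*}$ on the right-hand side; but for $n\ge 4$ the hypothesis is only $f\in LE^*$, i.e.\ $\chi f\in LE_0^*$ and $(1-\chi)f\in Y$, and $Y$ does not embed into $LE_0^*$ (the inclusion goes the other way, $LE_0^*\subset LE^*$). So $\|f\|_{LE_0^*}$ is not controlled, and with it your bound $\|[P,\chi]u\|_Y\lesssim\|u\|_{LE_0}$ in the exterior step collapses. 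Pairing the $Y$-part of $f$ against the classical multiplier instead would require $X$-type control of $\nabla u$ near the obstacle, which is precisely what is being proved --- the argument is circular. The paper resolves this by reversing the order: first solve the boundaryless problem $Pu_\infty=(1-\chi)f$ with zero data via Proposition~\ref{glw} (which accepts $Y$ forcing), control $\chi_\infty u_\infty$ in $LE$ by the Hardy inequalities of Proposition~\ref{phardy}, and note that the remainder $\psi=u-\chi_\infty u_\infty$ satisfies $P\psi=\chi_\infty f+[P,\chi_\infty]u_\infty\in LE_0^*$; only then does one run your two-step near/far argument on $\psi$. Your closing worry about $n=3$ is, by contrast, misplaced: there $LE^*=LE_0^*$ by definition, and your order of operations is exactly the paper's, with $\|P[(1-\chi)u]\|_Y$ controlled by $\|P[(1-\chi)u]\|_{LE_0^*}$ via \eqref{reverseHardy}.
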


These results correspond to the $s=0$ results of \cite{globalw}.  Some
more general results are also available by permitting $s\neq 0$, but
for simplicity we shall not provide these details.

Once we have the local energy estimates, the next step is to 
prove the Strichartz estimates.  To do so, we shall assume that the
corresponding Strichartz estimate holds locally-in-time. 

\begin{definition}
  For a given operator $P$ and domain $\Omega$, we say that the
  local Strichartz estimate holds if
  \begin{equation}\label{locStr}
    \|\nabla u\|_{|D_x|^{\rho_1}L^{p_1}L^{q_1}([0,1]\times \ext)}
\lesssim \|\nabla u(0)\|_{L^2} + \|f\|_{|D_x|^{-\rho_2}L^{p'_2}L^{q_2'}([0,1]\times\ext)}
  \end{equation}
for any solution $u$ to \eqref{maineq}.
\end{definition}

As mentioned previously, \eqref{locStr} is only known under some
fairly restrictive hypotheses.  We show a conditional result which
says that the global-in-time Strichartz estimates follow from the
local-in-time estimates as well as the localized energy estimates.

\begin{theorem}\label{globalStrichartz}
  Let $\Omega$ be a domain such that $P$ satisfies both the localized
  energy estimates and the local Strichartz estimate.  Let $a^{ij},
  b^i, c$ satisfy \eqref{coeff}, \eqref{coeffb}, and \eqref{coeffcc}.  Let
  $(\rho_1,p_1,q_1)$ and $(\rho_2,p_2,q_2)$ be two Strichartz pairs.
  Then the solution $u$ to \eqref{maineq} satisfies
  \begin{equation}\label{glStr}
    \|\nabla u\|_{|D_x|^{\rho_1}L^{p_1}L^{q_1}}\lesssim \|\nabla
    u(0)\|_{L^2} + \|f\|_{|D_x|^{-\rho_2}L^{p_2'}L^{q_2'}}.
  \end{equation}
\end{theorem}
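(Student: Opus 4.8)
The plan is to glue the local-in-time Strichartz estimate \eqref{locStr} into a global one using the localized energy estimate in a neighborhood of the obstacle, and to invoke the boundaryless global estimate of \cite{globalw} away from it. It suffices to prove the homogeneous estimate $\|\nabla u\|_{|D_x|^{\rho_1}L^{p_1}L^{q_1}}\lesssim\|\nabla u(0)\|_{L^2}$ for every Strichartz pair. Indeed, for the purely inhomogeneous problem $(u_0=u_1=0)$, Duhamel's principle writes $u(t)=\int_0^t U(t,s)\,f(s)\,ds$ with $U(t,s)$ the forward solution operator of $P$; dropping the truncation, the operator $f\mapsto\int_{\R}U(t,s)\,f(s)\,ds$ is bounded from $|D_x|^{-\rho_2}L^{p_2'}L^{q_2'}$ into $|D_x|^{\rho_1}L^{p_1}L^{q_1}$ (after applying $\nabla$), by composing the homogeneous estimate for $(\rho_1,p_1,q_1)$ with the dual of the homogeneous estimate for $(\rho_2,p_2,q_2)$ (the latter applied to the adjoint $P^*$, which is of the same type, with constants uniform in the initial time since the class is translation invariant); the Christ--Kiselev lemma restores the truncation when $p_2'<p_1$, and the remaining case $p_1=p_2'=2$ is handled by the standard bilinear argument.

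So assume $f=0$. The localized energy estimate gives $\|u\|_{LE}\lesssim\|\nabla u(0)\|_{L^2}$, and in particular $\|\nabla u\|_{L^2(\R\times\{|x|\le 4R\})}+\|u\|_{L^2(\R\times\{|x|\le 4R\})}\lesssim\|u\|_{LE_0}\lesssim\|\nabla u(0)\|_{L^2}$. Split $u=(1-\chi)u+\chi u$. Since $1-\chi$ vanishes near $\Omega$, we extend the coefficients of $P$ across $\{|x|<2R\}$ preserving \eqref{coeff}--\eqref{coeffcc} and treat $(1-\chi)u$ as a solution on $\R\times\R^n$ of $Pv=-[P,\chi]u$ with initial energy $O(\|\nabla u(0)\|_{L^2})$. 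The commutator $[P,\chi]u$ is supported in the fixed annulus $\{2R\le|x|\le 4R\}$ and is first order in $u$, so by \eqref{reverseHardy} it lies in $Y$ with $\|[P,\chi]u\|_Y\lesssim\|u\|_{LE_0}$. The boundaryless global Strichartz estimate of \cite{globalw}, whose forcing norm accommodates $Y$, then yields $\|\nabla((1-\chi)u)\|_{|D_x|^{\rho_1}L^{p_1}L^{q_1}}\lesssim\|\nabla u(0)\|_{L^2}$.

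For $\chi u$, supported in $\{|x|\le 4R\}$, take a partition of unity $1=\sum_{m\in\Z}\beta(t-m)$ with $\supp\beta\subset(-1,1)$ and set $w_m=\beta(\cdot-m)\,\chi u$. Each $w_m$ is supported in the bounded slab $(m-1,m+1)\times\{|x|\le 4R\}$, vanishes on $\partial\Omega$, has zero Cauchy data at $t=m-1$, and solves $Pw_m=g_m$ with $g_m=\beta(\cdot-m)[P,\chi]u+[P,\beta(\cdot-m)]\chi u$, again first order in $u$ and supported in that slab. Applying \eqref{locStr} on the two unit subintervals of $[m-1,m+1]$, and using the energy estimate (the case $\rho=0$, $(p,q)=(\infty,2)$ of \eqref{locStr}) to bound the Cauchy data of $w_m$ at $t=m$ by $\|g_m\|_{L^2_{t,x}}$, we obtain $\|\nabla w_m\|_{|D_x|^{\rho_1}L^{p_1}L^{q_1}}\lesssim\|g_m\|_{|D_x|^{-\rho_2}L^{p_2'}L^{q_2'}}\lesssim\|g_m\|_{L^2_{t,x}}$, the last inequality because $g_m$ lives on a bounded space-time set (one uses $q_2'\le 2\le q_1$, $p_2'\le 2\le p_1$, and a Sobolev-type bound to absorb $|D_x|^{-\rho_2}$ when $\rho_2<0$). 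Since $\chi u=\sum_m w_m$ with finitely overlapping time supports, $\ell^2\hookrightarrow\ell^{p_1}$ gives $\|\nabla(\chi u)\|_{|D_x|^{\rho_1}L^{p_1}L^{q_1}}^2\lesssim\sum_m\|g_m\|_{L^2_{t,x}}^2\lesssim\|\nabla u\|_{L^2(\R\times\{|x|\le 4R\})}^2+\|u\|_{L^2(\R\times\{|x|\le 4R\})}^2\lesssim\|\nabla u(0)\|_{L^2}^2$. Adding the two contributions completes the homogeneous estimate.

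The crux is the near-obstacle analysis, and the delicate point is the interplay of the function spaces used to absorb the commutator errors. These errors are global in time, which is why $\chi u$ must be cut into unit time slices, each supported on a bounded space-time set where $L^2_{t,x}$ embeds into the dual Strichartz space; for $(1-\chi)u$, by contrast, one relies on the norm $Y$, which is designed to absorb spatially localized, globally-in-time $L^2$ data. The slicing must also keep each $w_m$ inside a single length-two interval, so that its Cauchy data at the left endpoint is genuinely zero --- this is what avoids the loss of a derivative when estimating an energy trace at an interior time. The remaining technical points --- commuting the cutoffs past $|D_x|^{\pm\rho}$ over bounded regions, and the endpoint $p_1=p_2'=2$ in the Christ--Kiselev step --- are routine.
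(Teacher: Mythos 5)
Your treatment of the homogeneous problem ($f=0$) is essentially the paper's Proposition \ref{hom}: placing $[P,\chi]u$ in $Y$ via \eqref{reverseHardy} so that the boundaryless global result of \cite{globalw} handles $(1-\chi)u$, and time-slicing $\chi u$ into unit slabs with square-summable $L^2$ sources controlled by the localized energy norm, is exactly the paper's argument. The genuine divergence --- and the gap --- is your reduction of the inhomogeneous estimate to the homogeneous one by a $TT^*$/Christ--Kiselev argument. The Christ--Kiselev lemma requires $p_2'<p_1$, and for $n\ge 4$ the theorem permits $p_1=p_2=2$ (e.g.\ both pairs at the endpoint $q=\tfrac{2(n-1)}{n-3}$). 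At that double endpoint the ``standard bilinear argument'' is not available as a black box: the Keel--Tao bilinear decomposition derives the retarded double-endpoint estimate from the \emph{dispersive} bound for the untruncated evolution, not from the homogeneous Strichartz estimates, and no such dispersive bound is known or assumed for the reflected, variable-coefficient flow; retarded estimates are not in general formal consequences of the homogeneous ones. A secondary issue is that your factorization through $\int_\R U(0,s)f(s)\,ds$ needs the homogeneous estimate for the backward/adjoint evolution, i.e.\ localized energy and local Strichartz estimates for $P^*$ with Dirichlet conditions; these are not hypotheses of the theorem (which does not assume $\e$ small, so one cannot simply rerun Theorem \ref{l3} for $P^*$), and would at least require a time-reversal/duality argument.

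The paper avoids both problems by never deducing the global inhomogeneous estimate from the homogeneous one. It assumes the \emph{inhomogeneous} local estimate \eqref{locStr} and builds a parametrix $K$ (Proposition \ref{nonhom}): near the obstacle it solves $P\psi_j=\beta(t-j)\chi f$ on unit time slabs using \eqref{locStr} and truncates; near infinity it invokes the global inhomogeneous boundaryless bound of \cite[Theorem 6]{globalw}. All gluing errors are measured in $LE^*$, where the localized energy estimate together with Proposition \ref{hom} absorbs them; Christ--Kiselev is then invoked only once, in dimension three for an $L^2$-in-time output, where $p_2'<2$ always holds. Your homogeneous argument is sound, but as written the inhomogeneous case is not closed at the double endpoint, which the theorem is meant to cover.
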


Notice that this conditional result does not require the $\e$ in
\eqref{coeff}, \eqref{coeffb}, and \eqref{coeffcc} to be small.  We
do, however, require this for our proof of the localized energy
estimates which are assumed in Theorem \ref{globalStrichartz}. 

As an example of an immediate corollary of the localized energy
estimates of Theorem \ref{l3} and the local Strichartz estimates of
\cite{SmSoLocal}, we have:

\begin{corollary}
  Let $n\ge 3$, and let $\Omega$ be a strictly convex domain.  Assume
  that the coefficients $a^{ij}$, $b^i$ and $c$ are time-independent
  in a neighborhood of $\Omega$ and satisfy \eqref{coeff},
  \eqref{coeffb} and \eqref{coeffcc} with an $\e$ which is sufficiently
  small.   Let
  $(\rho_1,p_1,q_1)$ and $(\rho_2,p_2,q_2)$ be two Strichartz pairs
  which satisfy
  \[ \frac{1}{p_1} =
  \Bigl(\frac{n-1}{2}\Bigr)\Bigl(\frac{1}{2}-\frac{1}{q_1}\Bigr),\quad
   \frac{1}{p_2'}=\Bigl(\frac{n-1}{2}\Bigr)\Bigl(\frac{1}{2}-\frac{1}{q_2'}\Bigr).\]
  Then the
  solution $u$ to \eqref{maineq} satisfies
\begin{equation}
\| \nabla u\|_{|D_x|^{\rho_1} L^{p_1}L^{q_1}}
\lesssim \|\nabla u(0)\|_{L^2} + 
\|f\|_{|D_x|^{-\rho_2} L^{p'_2}L^{q'_2}}.
\label{fse} \end{equation}
\label{tfse}\end{corollary}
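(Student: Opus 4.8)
The plan is to read off \eqref{fse} directly from Theorem \ref{globalStrichartz}: it suffices to verify that, under the stated hypotheses, $P$ satisfies both the localized energy estimates and the local Strichartz estimate \eqref{locStr}, and then to apply that theorem to the prescribed pairs. One first notes that the condition $\frac1{p_1}=\frac{n-1}2\bigl(\frac12-\frac1{q_1}\bigr)$ (and its analogue for the dual pair) places $(\rho_1,p_1,q_1)$ on the endpoint line of the Strichartz range, so these are legitimate Strichartz pairs in the sense of the introduction; one only excludes $q_1=\infty$ when $n=3$, which would be the forbidden triple $(1,2,\infty)$.

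The localized energy estimates are immediate from Theorem \ref{l3}: a bounded strictly convex obstacle is star-shaped (about any of its interior points), the coefficients $a^{ij},b^i,c$ satisfy \eqref{coeff}, \eqref{coeffb}, \eqref{coeffcc} with small $\e$ by hypothesis, and $n\ge 3$. This is exactly the hypothesis of Theorem \ref{l3}, so $P$ satisfies the localized energy estimates globally in time.

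The local Strichartz estimate \eqref{locStr} is the point requiring a short argument, since \cite{SmSoLocal} establishes local-in-time Strichartz bounds near a strictly geodesically convex obstacle only for \emph{smooth, time-independent} coefficients with $b^i\equiv c\equiv 0$, whereas here the coefficients are time-independent (and smooth) only in a neighborhood of $\Omega$ and carry lower-order terms. On the time interval $[0,1]$ one uses finite speed of propagation to split the problem spatially: for the part of the solution supported, throughout $[0,1]$, in the region where $P$ coincides with a smooth time-independent operator, the principal part is a small time-independent perturbation of $\Box$ — so the Euclidean strict convexity of $\Omega$ upgrades to strict geodesic convexity for the associated spatial metric once $\e$ is small — and the lower-order terms $b^iD_i+c$ may be moved to the right-hand side and absorbed via the energy inequality; thus \cite{SmSoLocal} applies to this piece. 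The complementary part solves, over $[0,1]$, a boundaryless equation with $C^{1,1}$ coefficients (the bounds \eqref{coeff}, \eqref{coeffb} give $\nabla^2 a,\nabla b\in L^\infty_{\mathrm{loc}}$), for which the variable-coefficient local Strichartz estimates of \cite{MR1644105, nlw, cs, lp} hold; the commutator errors produced by the spatial cutoffs are supported in fixed compact sets and are controlled by the energy and localized energy estimates. Summing the two contributions gives \eqref{locStr} for the sharp-admissible pairs at hand.

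Having verified both hypotheses, Theorem \ref{globalStrichartz}, applied to $(\rho_1,p_1,q_1)$ and $(\rho_2,p_2,q_2)$, yields \eqref{glStr}, which is \eqref{fse}. \textbf{The main obstacle} is the local Strichartz step: reconciling the global, time-dependent, lower-order-perturbed operator $P$ with the rigid hypotheses (smooth, time-independent, principal part only, strictly geodesically convex) under which the Melrose--Taylor parametrix of \cite{SmSoLocal} is available, and checking that each error term created by the spatial cutoffs and the lower-order terms lives in a region controlled by the (localized) energy estimates. The remaining two steps are direct applications of Theorems \ref{l3} and \ref{globalStrichartz}.
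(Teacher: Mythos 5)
Your proposal is correct and follows the same route the paper intends: the corollary is presented as an immediate consequence of Theorem \ref{l3} (a strictly convex obstacle is star-shaped, so the localized energy estimates hold) together with the local Strichartz estimates of \cite{SmSoLocal}, both fed into Theorem \ref{globalStrichartz}. Your extra discussion of how to patch the spatially local bounds of \cite{SmSoLocal} together with boundaryless variable-coefficient Strichartz estimates (and to absorb the lower-order terms) in order to obtain \eqref{locStr} on all of $[0,1]\times(\R^n\setminus\Omega)$ is a sound elaboration of a step the paper leaves entirely implicit.
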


This paper is organized as follows.  
In the next section,
we prove the localized energy estimates for small perturbations of the
d'Alembertian exterior to a star-shaped obstacle.  In the
last section, we prove Theorem \ref{globalStrichartz} which says that
global-in-time Strichartz estimates follow from the localized energy
estimates as well as the local Strichartz estimates.

\section{The localized energy estimates}
\label{mora}

In this section, we shall prove Theorem~\ref{l3}.

By combining the inclusions $LE \subset LE_0$, $LE_0^* \subset LE^*$ and
the bounds \eqref{hardy.ge4}, \eqref{coeffb}, and \eqref{coeffcc}, one
can easily prove the following which permits us to treat the lower
order terms perturbatively.  See, also, \cite[Lemma 3]{globalw}.

\begin{proposition}\label{bcbound}
  Let $b^i$, $c$ be as in \eqref{coeffb} and \eqref{coeffcc}
  respectively.  Then,
  \begin{align}
    \|b\nabla u\|_{LE^*} &\lesssim \e \|u\|_{LE}, \label{bbound}\\
    \|cu\|_{LE^*}&\lesssim \e\|u\|_{LE}.  \label{cbound}
  \end{align}
\end{proposition}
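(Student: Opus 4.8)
The plan is to reduce both inequalities to one-line dyadic estimates, using only the structure of the spaces and the hypotheses \eqref{coeffb}, \eqref{coeffcc}. First I would record that, by the inclusions $LE \subset LE_0$ and $LE_0^* \subset LE^*$ (the latter being part of the very construction of $LE^*$, and following from the dual Hardy inequality \eqref{reverseHardy}), it is enough to prove $\|b\nabla u\|_{LE_0^*} \lesssim \e\|u\|_{LE_0}$ and $\|cu\|_{LE_0^*}\lesssim \e\|u\|_{LE_0}$. For bookkeeping I would set $\e_j = \sup_{A_j\cap(\R\times\ext)}(\la x\ra^2|\nabla b| + \la x\ra|b|)$ and $\delta_j = \sup_{A_j\cap(\R\times\ext)}\la x\ra^2|c|$, so that $\sum_j\e_j \le \e$ and $\sum_j\delta_j\le\e$ by \eqref{coeffb} and \eqref{coeffcc}; since $\la x\ra\approx 2^j$ on $A_j$, this gives $|b|\lesssim \e_j 2^{-j}$ and $|c|\lesssim\delta_j 2^{-2j}$ there. (All $L^2$ norms below are over $A_k\cap(\R\times\ext)$.)

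With this in place the two estimates fall straight out of the definitions of $LE_0^*$ and $LE_0$. On the shell $A_k$ one has $2^{k/2}\|b\nabla u\|_{L^2} \lesssim \e_k\,2^{-k/2}\|\nabla u\|_{L^2} \le \e_k\|u\|_{LE_0}$, since $2^{-k/2}\|\nabla u\|_{L^2(A_k\cap(\R\times\ext))}$ is one of the two quantities the $LE_0$ norm dominates; summing in $k$ and using $\sum_k\e_k\le\e$ yields \eqref{bbound}. Likewise $2^{k/2}\|cu\|_{L^2}\lesssim \delta_k\,2^{-3k/2}\|u\|_{L^2} \le \delta_k\|u\|_{LE_0}$, now invoking the other defining quantity $2^{-3k/2}\|u\|_{L^2(A_k\cap(\R\times\ext))}$ of the $LE_0$ norm, and summing gives \eqref{cbound}. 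Finally $\|g\|_{LE^*}\lesssim\|g\|_{LE_0^*}$ and $\|u\|_{LE_0}\le\|u\|_{LE}$ turn these into the stated bounds.

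Alternatively, and more in the spirit of \cite[Lemma 3]{globalw}, I would keep the cutoff $\chi$ visible: the near-obstacle pieces $\|\chi b\nabla u\|_{LE_0^*}$ and $\|\chi cu\|_{LE_0^*}$ are handled exactly as above (only finitely many shells occur), while for the far pieces one applies \eqref{reverseHardy} to obtain $\|(1-\chi)cu\|_Y \lesssim \sum_j\||x|^{1/2}(1-\chi)cu\|_{L^2(A_j)} \lesssim \bigl(\sum_j\delta_j\bigr)\,\||x|^{-3/2}(1-\chi)u\|_{L^2}$, and then invokes the Hardy-type bound \eqref{hardy.ge4} — available since $n\ge 4$ in this branch — to dominate the last factor by $\|\nabla_x(1-\chi)u\|_X \le \|(1-\chi)u\|_{LE_\infty}\le\|u\|_{LE}$; the $b$-term is the same, but uses only the dyadic control already noted. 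This presentation makes explicit why \eqref{hardy.ge4} enters for $n\ge 4$ and why it is never invoked for $n=3$, where $LE^*=LE_0^*$ and the first argument applies verbatim.

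I do not expect any genuine obstacle here. The entire content is aligning the $\la x\ra^{-1}$ (resp.\ $\la x\ra^{-2}$) gain from \eqref{coeffb} (resp.\ \eqref{coeffcc}) against the $2^{k/2}$ weight in $LE_0^*$ and the $2^{-k/2}$ (resp.\ $2^{-3k/2}$) weights in $LE_0$, together with the fact that \eqref{coeffb} and \eqref{coeffcc} are summed in $\ell^1$ over dyadic shells, so the resulting series converge to $O(\e)$. The one point meriting a line of care is the dimensional split just mentioned: one must not apply \eqref{hardy.ge4} in $n=3$, where it fails — but this is automatic from the definition of the spaces, since there $\|f\|_{LE^*}=\|f\|_{LE_0^*}$.
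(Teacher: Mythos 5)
Your proof is correct, and it is essentially the argument the paper intends: the paper gives no written proof beyond listing the ingredients ($LE\subset LE_0$, $LE_0^*\subset LE^*$, \eqref{hardy.ge4}, \eqref{coeffb}, \eqref{coeffcc}), and your dyadic weight-matching combined with the $\ell^1$ summability of the shell constants $\e_k,\delta_k$ is the natural implementation. The one place you genuinely diverge is worth noting: your primary argument never invokes \eqref{hardy.ge4}, because the $LE_0$ norm as defined in this paper already contains the component $2^{-3j/2}\|u\|_{L^2(A_j\cap(\R\times\ext))}$, so $\|cu\|_{LE_0^*}\lesssim\e\|u\|_{LE_0}$ follows from pure weight bookkeeping; the Hardy bound \eqref{hardy.ge4} only becomes necessary in your alternative presentation, where the far piece is estimated against $\|(1-\chi)u\|_{LE_\infty}$ alone, as in the boundaryless setting of \cite{globalw} whose local energy norm carries no zeroth-order term. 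Both routes are valid, and your explicit caution that \eqref{hardy.ge4} is neither available nor needed when $n=3$ (since there $LE^*=LE_0^*$) is exactly the right point to make.
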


We now look at the proof of the localized energy estimates.  
Due to Proposition \ref{bcbound} we can assume that
$b=0, c=0$.  To prove the theorems, we use positive commutator
arguments.  We first do the analysis separately in the two regions.

\subsection{Analysis near $\Omega$ and classical Morawetz-type estimates}
Here we sketch the proof from \cite{MetSo} which gives an estimate
which is similar to \eqref{kss} for small perturbations of the d'Alembertian.
This estimate shall allow us to gain control of the solution near the
boundary.  It also permits local $L^2$ control of the solution, not
just the gradient in three dimensions.  The latter is necessary as the
required Hardy inequality which can be utilized in higher dimensions
corresponds to a false endpoint estimate in three dimensions.

The main estimate is the following:
\begin{proposition}\label{prop.kss}
  Let $\Omega$ be a star-shaped domain.  Assume that the coefficients
  $a^{ij}$, $b^i$, $c$ satisfy \eqref{coeff}, \eqref{coeffb}, and
  \eqref{coeffcc} respectively with an $\e$ which is sufficiently
  small.  Suppose that $\phi$ satisfies $P\phi=F$, $\phi|_\bdy=0$.  Then 
  \begin{equation}\label{main.kss}
    \|\phi\|_{LE_0} + \|\nabla \phi\|_{L^\infty L^2}
+ \|\partial_\nu \phi\|_{L^2(\bdy)}\lesssim \|\nabla \phi(0)\|_2 
+ \|F\|_{LE_0^*}.
  \end{equation}
\end{proposition}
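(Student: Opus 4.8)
The plan is to run a positive commutator (Morawetz) argument with a first-order multiplier adapted to the star-shaped geometry. Since Proposition \ref{bcbound} lets us drop the lower order terms, assume $b=0$, $c=0$, so that $P = D_i a^{ij} D_j$ with $a$ an $\e$-small perturbation of the Minkowski metric in the sense of \eqref{coeff}. The multiplier is of the form $X\phi + \frac{n-1}{2} m(r)\phi$, where $X = f(r)\partial_r$ is a radial vector field and $m(r)$ is chosen (roughly $m = f' + \frac{n-1}{r}f$ up to lower order) so that the zeroth order contributions of the commutator combine favorably. The standard choice is $f$ increasing, bounded, with $f(r) \sim r$ for $r$ small (to produce the boundary term with a good sign) and $f(r) \to \text{const}$ as $r \to \infty$, together with a piece like $f(r) = r/(r + 2^j)$ localized at each dyadic scale, taking the supremum over $j$ at the end to recover the $\sup_j$ in the $LE_0$ norm. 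First I would record the multiplier identity: multiply $P\phi = F$ by $\overline{(X\phi + \frac{n-1}2 m\phi)}$, integrate over $[0,T]\times\ext$, and take real parts, integrating by parts in space and time. This produces (i) a spacetime bulk term which, for the flat metric, is manifestly nonnegative and controls $\||x|^{-1/2}\nabla\phi\|$ away from a fixed ball and, via the $\frac{n-1}2 m\phi$ correction and a Hardy inequality, also $\||x|^{-3/2}\phi\|$; (ii) a boundary term on $\R\times\bdy$ which, because $\Omega$ is star-shaped and $f(r)\sim r$ near the obstacle, has a definite sign and equals a positive multiple of $\|\partial_\nu\phi\|_{L^2(\bdy)}^2$ (here one uses $\phi|_\bdy = 0$, so only the normal derivative survives and the tangential derivatives drop); (iii) time-slice energy terms at $t=0$ and $t=T$ bounded by $\|\nabla\phi(0)\|_2^2$ and $\|\nabla\phi\|_{L^\infty L^2}^2$; and (iv) the forcing term, estimated by $\|F\|_{LE_0^*}\|\phi\|_{LE_0}$ via Cauchy--Schwarz against the dyadic structure of the norms.

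Next I would handle the error terms coming from the fact that $a^{ij}\neq m^{ij}$. Every place the flat computation produced a perfect sign, the variable coefficients contribute an error that is, schematically, $a - I$ times first derivatives of $\phi$ and $\nabla a$ times $\phi\,\nabla\phi$. Using \eqref{coeff} — which gives $|a - I| \lesssim \e$, $|\nabla a|\lesssim \e\la x\ra^{-1}$, $|\nabla^2 a|\lesssim \e \la x\ra^{-2}$ summably over dyadic shells — these errors are all bounded by $\e$ times the squared $LE_0$ norm of $\phi$ (the $\la x\ra^{-1}$, $\la x\ra^{-2}$ weights exactly matching the weights built into $LE_0$, with the second-derivative term handled after an integration by parts). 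Because the constant is $\e$, these are absorbed into the left side for $\e$ small. One must also separately obtain the standard energy inequality $\|\nabla\phi\|_{L^\infty L^2}^2 \lesssim \|\nabla\phi(0)\|_2^2 + (\text{forcing})$ — this is the usual energy estimate for $P$, again perturbative off the flat one using \eqref{coeff} and the fact that $a^{00}<0$ and time slices are space-like, so that $\int a^{ij}\partial_i\phi\,\overline{\partial_j\phi}$ is a norm equivalent to $\|\nabla\phi\|_2^2$ — and feed it back in to control term (iii). Combining, absorbing, and then optimizing/taking the supremum over the dyadic localization parameter $j$ in the multiplier yields \eqref{main.kss}. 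The uniqueness statement and the a priori-to-genuine-solution upgrade follow from the energy estimate in the usual way and I would only remark on it.

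The main obstacle I expect is the interplay between the low-frequency / near-infinity behavior and the local $L^2$ control of $\phi$ itself (not just $\nabla\phi$), which is why the proposition is stated with the weaker $LE_0$ norm rather than the frequency-localized $X$ norm. Concretely: in dimension $n=3$ the Hardy inequality $\||x|^{-3/2}\phi\|_{L^2}\lesssim\|\nabla_x\phi\|_X$ of \eqref{hardy.ge4} fails, so one cannot close a bound on $\||x|^{-3/2}\phi\|$ by differentiating alone; the $LE_0$ norm instead only asks for $2^{-3j/2}\|\phi\|_{L^2(A_j)}$, and the multiplier's zeroth-order term $\frac{n-1}2 m(r)\phi$ must be tuned so that the commutator directly produces a nonnegative quantity comparable to $\int |x|^{-3}|\phi|^2$ in, say, an exterior region, rather than relying on an a posteriori Hardy inequality. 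Getting the sign of this zeroth-order term right — and checking that the boundary contribution it generates does not spoil the positivity of the $\|\partial_\nu\phi\|_{L^2(\bdy)}^2$ term, using star-shapedness — is the delicate point; it is exactly the issue flagged in the text ("the required Hardy inequality ... corresponds to a false endpoint estimate in three dimensions"). Since this is precisely the argument of \cite{MetSo}, I would carry out the flat computation in detail, cite \cite{MetSo} for the precise choice of $f$ and $m$, and then do the perturbation bookkeeping carefully.
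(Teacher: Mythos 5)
Your proposal is correct and follows essentially the same route as the paper: a positive commutator argument with the multiplier $f(r)\partial_r + \frac{n-1}{2}\frac{f(r)}{r}$, $f=r/(r+\rho)$, taking a supremum over dyadic $\rho$, using star-shapedness for the sign of the boundary term, Cauchy--Schwarz against the dyadic structure for the forcing, and absorbing the $O(\e)$ errors from \eqref{coeff}. The only cosmetic difference is that the paper folds $\partial_t\phi$ into the multiplier so the energy bound comes out of the same identity, whereas you propose proving the energy inequality separately and feeding it back in; both work.
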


\begin{proof}
  We provide only a terse proof.  The interested reader can refer to
  \cite{MetSo} for a more detailed proof.  For $f=\frac{r}{r+\rho}$,
  where $\rho$ is a fixed positive constant, we use a multiplier of
  the form
\[
\partial_t\phi + f(r)\partial_r\phi +\frac{n-1}{2}\frac{f(r)}{r}\phi.
\]
By multiplying $P\phi$ and integrating by parts, one obtains
\begin{equation}\label{commutator}
\begin{split}
  \!\! \int_0^T \!\! \int_\ext \frac{1}{2}f'(r)(\partial_r\phi)^2 & +
  \Bigl(\frac{f(r)}{r}-\frac{1}{2}f'(r)\Bigr)|\ang\phi|^2 +
  \frac{1}{2}f'(r)(\partial_t \phi)^2 -
  \frac{n-1}{4}\Delta\Bigl(\frac{f(r)}{r}\Bigr)\phi^2\:dxdt \!\!  \\ &
  -\frac{1}{2}\!\int_0^T\!\!\int_\bdy \!\!\frac{f(r)}{r}(\partial_\nu \phi)^2
  \la x,\nu\ra (a^{ij}\nu_i\nu_j)\:d\sigma dt + (1+O(\epsilon)) \|\nabla
  \phi(T)\|^2_2\\\lesssim & \ \|\nabla
  \phi(0)\|^2_2 + \int_0^T\int_\ext
  |F|\Bigl(|\partial_t\phi|+|f(r)\partial_r\phi|+
  \Bigl|\frac{f(r)}{r}\phi\Bigr|\Bigr)\:dx\:dt  \\ &\ +
  \int_0^T\int_\ext O\Bigl(\frac{|a-I|}{r}+|\nabla a|\Bigr)|\nabla
  \phi|\Bigl(|\nabla \phi| + \Bigl|\frac{\phi}{r}\Bigr|\Bigr)\:dx\:dt.
  \\\lesssim & \ \|\nabla \phi(0)\|^2_2 + \|F\|_{LE_0^*(0,T)}
  \|\phi\|_{LE_0(0,T)} + \epsilon \|\phi\|_{LE_0(0,T)}^2.
\end{split}
\end{equation}
Here, we have used the Hardy inequality $\||x|^{-1}\phi\|_2\lesssim
\|\nabla \phi\|_2$, $n\geq 3$, as well as \eqref{coeff}. 

All terms on the left are nonnegative.  By direct computation, the
first term controls
\[
\rho^{-1} \|\nabla \phi\|^2_{L^2([0,T]\times \{|x|\approx \rho\})}
+ \rho^{-3} \|\phi\|^2_{L^2([0,T]\times \{|x|\approx \rho\})}.
\]
Taking a supremum over dyadic $\rho$ provides a bound for the
$\|\phi\|_{LE_0(0,T)}$. In the second term
we have  $-\la x,\nu\ra \gtrsim 1$, which follows from the assumption
that $\Omega$ is star-shaped, and also  $a^{ij}\nu_i\nu_j\gtrsim 1$
which follows from \eqref{coeff}.  By simply taking
$\rho=1$, one can  bound the third term in the left of
\eqref{main.kss} by the right side of \eqref{commutator}. 
Thus we obtain
\[
   \|\phi\|_{LE_0(0,T)} + \|\nabla \phi(T)\|_{L^\infty L^2}
+ \|\partial_\nu \phi\|_{L^2(\bdy)}\lesssim \|\nabla \phi(0)\|^2_2 + \|F\|_{LE_0^*(0,T)}
  \|\phi\|_{LE_0(0,T)} + \epsilon \|\phi\|_{LE_0(0,T)}^2.
\]
The $LE_0$ terms on the right can be bootstrapped for $\epsilon$ small
which yields \eqref{main.kss}.
\end{proof}

\subsection{Analysis near $\infty$ and frequency localized estimates}

In this section, we briefly sketch the proof from \cite{globalw} for some
frequency localized versions of the localized energy estimates for the
boundaryless equation.  The main estimate here, which is from
\cite{globalw}, is the following.

\begin{proposition} \label{glw}
  Suppose that $a^{ij}$ are as in Theorem~\ref{l3} and $b=0$, $c=0$.
  Then for each initial data $(u_0,u_1)\in \dot{H}^1\times L^2$ and
  each inhomogeneous term $f\in Y\cap L^1L^2$, there exists a unique solution $u$
  to the boundaryless equation 
\[Pu=f,\quad u(0)=u_0,\quad \partial_t u(0)=u_1\]
satisfying
\begin{equation}\label{mtle}
\|\nabla u\|_{L^\infty L^2\cap X}\lesssim \|\nabla u(0)\|_{L^2} +
\|f\|_{L^1L^2 + Y}.\end{equation}
\end{proposition}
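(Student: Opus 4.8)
This proposition is quoted from \cite{globalw}, so the plan is to reproduce that argument in the present notation. Existence and uniqueness are not the crux: uniqueness follows from finite speed of propagation for the $C^{1,1}$-type operator $P$, and existence follows from the a priori estimate \eqref{mtle} together with a routine approximation argument (smooth out $a^{ij}$, solve, pass to the limit), so it suffices to prove \eqref{mtle} for smooth $u$. Since the spaces $X$ and $Y$ are $\ell^2$ sums over a Littlewood--Paley decomposition, the natural strategy is to work one frequency at a time: with $u_k = S_k u$ and $f_k = S_k f$ we have $P u_k = f_k + [P,S_k]u =: g_k$, and because $\Box$ has constant coefficients $[P,S_k] = [P-\Box,S_k]$ depends only on the perturbation $a - I_n$, whose size and decay are controlled by \eqref{coeff}.

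The core is a positive commutator (Morawetz) estimate at a single frequency, of the same type as in Proposition~\ref{prop.kss} but with a multiplier adapted to the scale $2^{-k}$ and to a dyadic radius $2^j$ with $j \ge k^-$. One takes a vector field of the form $\partial_t + \beta_{k,j}(r)\partial_r + \tfrac{n-1}{2}\tfrac{\beta_{k,j}(r)}{r}$, where $\beta_{k,j}$ is bounded and nondecreasing, behaves like $2^k r$ for $r \lesssim 2^{-k}$ and like $r/(r+2^j)$ for $r \gtrsim 2^{-k}$, so that $\beta_{k,j}' \ge 0$ and the resulting spacetime integrand dominates $2^{-k^-}\|\nabla u_k\|_{L^2(A_{<k^-})}^2$ on the inner region (here one uses the frequency localization $\|\nabla u_k\| \approx 2^k \|u_k\|$ in place of a Hardy inequality, the uncertainty principle forcing a plain $L^2$ rather than a weighted bound) together with $2^{-j}\|\nabla u_k\|_{L^2(A_j)}^2$ on the dyadic shell $A_j$. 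Integrating $2\,\mathrm{Re}\,\la P u_k,\,(\text{multiplier})\,u_k\ra$ by parts, using that $(a^{ij})$ has signature $(n,1)$ with space-like time slices to get the energy term with the correct sign, and estimating the variable-coefficient errors by \eqref{coeff} and the Hardy inequality $\||x|^{-1}\cdot\|_2 \lesssim \|\nabla\cdot\|_2$ ($n \ge 3$) exactly as in \eqref{commutator}, one obtains, after taking the supremum over $j \ge k^-$,
\[
\|\nabla u_k\|_{L^\infty L^2}^2 + \|\nabla u_k\|_{X_k}^2 \,\lesssim\, \|\nabla u_k(0)\|_{L^2}^2 + \|g_k\|_{L^1 L^2 + X_k'}\,\|\nabla u_k\|_{L^\infty L^2 \cap X_k} + \epsilon\,E_k,
\]
where $\sum_k E_k \lesssim \|\nabla u\|_X^2$ collects the coefficient error terms.

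The main obstacle is the commutator: one needs $\sum_k \|[P-\Box,S_k]u\|_{X_k'}^2 \lesssim \epsilon^2 \|\nabla u\|_X^2$. Writing the divergence-form commutator as $D_i[(a^{ij}-\delta^{ij}),S_k]D_j$ and distributing derivatives produces terms carrying a factor $a-I_n$ --- controlled by almost-orthogonality of $S_k$ against the frequency-$2^k$ input together with $|a-I_n| \lesssim \epsilon$ --- and terms carrying $\nabla a$ or $\nabla^2 a$ --- controlled by the weights $\la x\ra|\nabla a| + \la x\ra^2|\nabla^2 a| \lesssim \epsilon$ together with the Hardy-type inequalities of Proposition~\ref{phardy}; crucially, the $\ell^1$-in-$j$ form of the sum in \eqref{coeff} is what converts these pointwise coefficient bounds into $\ell^2$-in-$k$ summability. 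I expect to invoke \cite[Lemmas~1--3]{globalw} (or reprove the relevant commutator estimate) here, the delicate point being that $S_k$ interacting with coefficients that are only $C^{1,1}$-with-weights must not cost the smallness.

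Finally one assembles: squaring and summing the single-frequency bound over $k$, using $\sum_k\|\nabla u_k\|_{X_k}^2 = \|\nabla u\|_X^2$ and $\sum_k\|f_k\|_{X_k'}^2 = \|f\|_Y^2$, splitting $g_k = f_k + [P-\Box,S_k]u$, and absorbing the $\epsilon$-small commutator contribution together with the $\epsilon E_k$ terms into the left-hand side since $\epsilon$ is small, yields $\|\nabla u\|_X \lesssim \|\nabla u(0)\|_{L^2} + \|f\|_{L^1L^2 + Y}$. The $L^\infty L^2$ bound comes along from the $\partial_t$ component of the multipliers, with the $L^1L^2$ part of $f$ handled by a Gronwall argument and the $Y$ part paired against $\partial_t u \in X$. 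Together with the approximation argument this gives existence of $u$ in the stated space and the estimate, while finite speed of propagation gives uniqueness.
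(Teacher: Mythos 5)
Your proposal is correct in outline, but it organizes the argument differently from the paper, which does not reprove this proposition at all: it quotes it from \cite{globalw} and only sketches that proof, in which the positive commutator argument is run \emph{once, on $u$ itself}, with the multiplier $D_t+\delta_0 Q+i\delta_1 B$ whose frequency localization is built into the multiplier through the sandwiched form $Q=\sum_k S_kQ_kS_k$ (and similarly for the Lagrangian term $B$), with weights $\phi_k$ carrying slowly varying envelopes $\alpha_k$ tied to the $\ell^1$-in-$j$ structure of \eqref{coeff}. You instead commute $S_k$ through the equation and run a classical scalar multiplier argument on each $u_k=S_ku$, treating $[P,S_k]u$ as a source. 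The two routes need the same ingredients (Bernstein in place of Hardy on $A_{<k^-}$, the $\ell^1$-dyadic sum in \eqref{coeff} to make the coefficient errors absorbable, commutator bounds for $[a,S_k]$ against weights), but the bookkeeping differs in one substantive way: in the sandwiched-multiplier version the $S_k$-commutators arise inside symmetric quadratic forms paired against $\nabla S_ku$ and are absorbed together with the other $O(\e)$ errors, whereas your version requires the standalone square-summable bound $\sum_k\|[P-\Box,S_k]u\|_{X_k'}^2\lesssim \e^2\|\nabla u\|_X^2$, which is the hardest single step and which you correctly identify as the point where the commutator lemmas of \cite{globalw} must be invoked or reproved. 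Two smaller points come out in your favor on inspection: the $\ell^2_k$ summation of the $L^1L^2$ pieces of the forcing is harmless by Minkowski's integral inequality, and your sup-over-$j$ family $\beta_{k,j}$ is the same device used in Proposition~\ref{prop.kss}, with the paper's $\alpha_k$-envelope achieving the same absorption in a single pass. So the proposal is acceptable as a proof sketch, with the understanding that it, like the paper, ultimately leans on \cite{globalw} for the commutator estimates.
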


The proof here uses a multiplier of the form
\[D_t + \delta_0 Q + i\delta_1 B.\]
Here the parameters are chosen so that
\[\e\ll \delta_1\ll \delta\ll \delta_0\ll 1.\]
The multiplier $Q$ is given by
\[
Q = \sum_k S_k Q_k S_k
\]
where $Q_k$ are differential operators of the form 
\[
Q_k = (D_x x \phi_k( |x|) + \phi_k(|x|) xD_x).
\]
The $\phi_k$ are functions of the form
\[
\phi_k (x) =  2^{-k^-} \psi_k(2^{-k^-} \delta x)
\]
where for each $k$ the functions $\psi_k$ have the following
properties:
\begin{enumerate}
\item[(i)] $\psi_k(s) \approx (1+s)^{-1}$ for  $s > 0$ and $|\partial^j
\psi_k(s)| \lesssim (1+s)^{-j-1}$ for $j \leq 4$,

\item[(ii)] $ \psi_k(s) + s \psi_k'(s) \approx (1+s)^{-1} \alpha_k(s)$ 
 for  $s > 0$,

\item[(iii)] $\psi_k(|x|)$ is localized at frequency $ \ll 1$.
\end{enumerate}
The $\alpha_k$ are slowly varying functions that are related to the
bounds of the individual summands in \eqref{coeff}.  This construction
is reminiscent of those in \cite{gS}, \cite{MMT}, and \cite{globalw}.

For the Lagrangian term $B$, we fix a function $b$ satisfying
\[ b(s)\approx \frac{\alpha(s)}{1+s},\quad |b'(s)|\ll b(s).\]
Then, we set $B=\sum_k S_k 2^{-k^-} b(2^{-k^-}x)S_k$.

The computations, which are carried out in detail in \cite{globalw},
are akin to those outlined in the previous section.

\subsection{Proof of Theorem~\ref{l3}}
Consider first the three dimensional case. For $f \in LE^* = LE_0^*$
we can  use Proposition~\ref{prop.kss} to obtain
\[
  \|u\|_{LE_0} + \|\nabla u\|_{L^\infty L^2}
+ \|\partial_\nu u\|_{L^2(\bdy)}\lesssim \|\nabla u(0)\|_2 
+ \|f\|_{LE_0^*}.
\]
It remains to estimate $\| (1-\chi)u\|_{LE_\infty}$ with $\chi$ as
in  the definition of $LE$. By \eqref{mtle} we have
\[
\| (1-\chi)u\|_{LE_\infty} \lesssim \| \nabla
(1-\chi)u(0)\|_{L^2} + \| P [(1-\chi)u]  \|_{Y} \lesssim \|\nabla
u(0)\|_{L^2} + \| P   [(1-\chi)u]  \|_{LE^*_0}.
\]
Finally, to bound the last term we write
\[
P   [(1-\chi)u] = -[P,\chi]u +(1-\chi) f.
\]
The commutator has compact spatial support; therefore
\[
 \| P   [(1-\chi)u]  \|_{LE^*_0} \lesssim \|u\|_{LE_0} + \|f\|_{LE_0^*}
\]  
and the proof is concluded.

Consider now higher dimensions  $n \geq 4$.
For fixed $f\in LE^*$,  we first solve the boundaryless problem
\[
Pu_\infty = (1-\chi)f \in Y, \qquad u_\infty(0) = 0, \ \partial_t u_{\infty}(0) = 0
\]
using Proposition~\ref{glw}.  We consider $\chi_\infty$ which is
identically $1$ in a neighborhood of infinity and vanishes on $\supp
\chi$. For the function $\chi_\infty u_\infty$ we use the 
Hardy inequalities in Proposition~\ref{phardy} to write
\[
\|\chi_\infty u_\infty\|_{LE} \approx \|\nabla(\chi_\infty u_\infty)\|_{X}
\lesssim \| \nabla u_\infty\|_{X} \lesssim \|(1-\chi_\infty)f\|_{Y}.
\]
The remaining part $\psi = u - \psi_\infty u_\infty$ solves
\[
P \psi = \chi_\infty f + [P,\chi_\infty] u_\infty;
\]
therefore
\[
\|P \psi\|_{LE_0^*} \lesssim \|f\|_{LE^*} + \|u_\infty\|_{LE_0}
\lesssim \|f\|_{LE^*} + \|\nabla u_\infty\|_{X}
\lesssim \|f\|_{LE^*}.
\]
Then we estimate $\psi$ as in the three dimensional case. The 
proof is concluded.

\section{The Strichartz estimates}

In this final section, we prove Theorem \ref{globalStrichartz}, the
global Strichartz estimates.  We use fairly standard arguments to
accomplish this.  In a compact region about the obstacle, we prove the
global estimates using the local Strichartz estimates and the
localized energy estimates.  Near infinity, we use \cite{globalw}.
The two regions can then be glued together using the localized energy
estimates.

We shall utilize the following two propositions.  The first gives the
result when the forcing term is in the dual localized energy space.

\begin{proposition}\label{hom}
 Let $(\rho,p,q)$ be a Strichartz pair.
 Let $\Omega$ be a domain such that $P$ satisfies both the localized
 energy estimates and the homogeneous local Strichartz estimate with exponents $(\rho,p,q)$.
 Then for each $\phi\in LE$
 with $P \phi\in LE^*$, we have
 \begin{equation}\label{homest}
   \||D_x|^{-\rho}\nabla \phi\|^2_{L^pL^q}\lesssim \|\nabla
   \phi(0)\|_{L^2}^2 + \|\phi\|_{LE}^2 + \|P \phi\|^2_{LE^*}.
 \end{equation}
\end{proposition}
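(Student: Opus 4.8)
\textbf{Proof proposal for Proposition~\ref{hom}.}
The plan is to split $\phi$ into a piece supported near the obstacle and a piece supported near infinity, estimate each by the appropriate available result, and then reassemble, absorbing all commutator errors into the localized energy norm on the right-hand side. First I would fix the cutoff $\chi$ from the definition of $LE$ (identically $1$ near $\Omega$, compactly supported) and write $\phi = \chi\phi + (1-\chi)\phi$. For the near-infinity piece $(1-\chi)\phi$, I would invoke the boundaryless theory behind Proposition~\ref{glw}/\cite{globalw}: since $(1-\chi)\phi$ solves a boundaryless equation $P[(1-\chi)\phi] = (1-\chi)P\phi - [P,\chi]\phi$, the global Strichartz estimate in the boundaryless setting (which in \cite{globalw} is derived from the localized energy estimate \eqref{mtle}) gives
\[
\||D_x|^{-\rho}\nabla[(1-\chi)\phi]\|_{L^pL^q} \lesssim \|\nabla[(1-\chi)\phi](0)\|_{L^2} + \|P[(1-\chi)\phi]\|_{LE^*},
\]
and the commutator $[P,\chi]\phi$ is spatially compactly supported, hence controlled by $\|\phi\|_{LE_0}\lesssim\|\phi\|_{LE}$, while $\|(1-\chi)P\phi\|_{LE^*}\lesssim\|P\phi\|_{LE^*}$ by definition of the $LE^*$ norm.

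For the near-obstacle piece $\chi\phi$, I would use the local-in-time Strichartz estimate together with the localized energy estimate to upgrade it to a global bound. The standard device is to sum over unit time intervals: on each slab $[m,m+1]\times\ext$ the function $\chi\phi$ solves \eqref{maineq} with data $(\chi\phi)(m)$ and forcing $\chi P\phi + [P,\chi]\phi$, so \eqref{locStr} gives
\[
\||D_x|^{-\rho}\nabla(\chi\phi)\|_{L^pL^q([m,m+1]\times\ext)}^2 \lesssim \|\nabla(\chi\phi)(m)\|_{L^2}^2 + \|\chi P\phi + [P,\chi]\phi\|_{|D_x|^{-\rho'}L^{p'}L^{q'}([m,m+1]\times\ext)}^2.
\]
Summing in $m$, the left side becomes $\||D_x|^{-\rho}\nabla(\chi\phi)\|_{L^pL^q}^2$ (using $p,q\ge 2$ and that $\ell^p\subset\ell^2$ fails the wrong way — here one uses $p\ge2$ so $\ell^2\hookrightarrow\ell^p$ on the time intervals, i.e. one actually bounds the $\ell^2$-in-$m$ sum of $L^pL^q$ norms, which dominates the full $L^pL^q$ norm). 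On the right, $\sum_m\|\nabla(\chi\phi)(m)\|_{L^2}^2$ is controlled by $\|\nabla\phi\|_{L^2([m,m+1];L^2)}^2$-type quantities summed up, which is $\lesssim\|\phi\|_{LE}^2$ since $\chi$ is compactly supported (so the $LE_0$ norm, which contains $2^{-j/2}\|\nabla\phi\|_{L^2(A_j)}$ over a bounded range of $j$, together with $\|\nabla\phi\|_{L^\infty L^2}$ coming from the energy estimate inside the localized energy estimate, gives $\ell^2$-summable time slices); and the forcing terms are bounded by $\|\chi P\phi\|_{LE_0^*}\lesssim\|P\phi\|_{LE^*}$ and $\|[P,\chi]\phi\|_{LE_0^*}\lesssim\|\phi\|_{LE}$, again using compact support of the commutator and comparability of $LE_0^*$ with the dual Strichartz norm on a fixed compact set.

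The main obstacle, and the step I would spend the most care on, is the passage from the local-in-time Strichartz estimate to the global one for $\chi\phi$: specifically, controlling $\sum_m\|\nabla(\chi\phi)(m)\|_{L^2}^2$ — the sum of energies at integer times — by $\|\phi\|_{LE}^2$. This is exactly where the localized energy structure is essential: on the compact spatial support of $\chi$, the $LE_0$ norm provides $\||x|^{-1/2}\nabla\phi\|_{L^2([0,\infty)\times K)}$, i.e. square-integrability in time of $\|\nabla\phi\|_{L^2(K)}$, which is precisely the $\ell^2$-in-$m$ control needed; one also needs to commute $\chi$ past $\nabla$ and past $P$, producing only lower-order compactly supported terms that are likewise dominated by $\|\phi\|_{LE}$ via the Hardy-type inequalities in Proposition~\ref{phardy} and the local $L^2$ bound on $\phi$ itself built into $LE_0$ (this local $L^2$ control being the reason the $n=3$ definition of $LE^*$ was chosen as it was). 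Once these bookkeeping points are handled, adding the two pieces $\chi\phi$ and $(1-\chi)\phi$ back together and using the triangle inequality in $|D_x|^{-\rho}L^pL^q$ yields \eqref{homest}.
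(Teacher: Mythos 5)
Your proposal is correct and follows essentially the same route as the paper: the same decomposition $\phi = \chi\phi + (1-\chi)\phi$, with the exterior piece handled by the boundaryless theory of \cite{globalw} and the interior piece by a unit-scale time decomposition whose square-summability in $j$ (or $m$) is exactly what the localized energy norm supplies. The only difference is cosmetic: the paper uses a smooth partition of unity $\beta(t-j)$ in time, so each piece has vanishing Cauchy data and the forcing sits in $L^1L^2$ (matching the purely homogeneous local Strichartz hypothesis via Duhamel), which sidesteps the pigeonhole selection of good initial times that your sharp time slabs $[m,m+1]$ would require to control $\sum_m\|\nabla(\chi\phi)(m)\|_{L^2}^2$.
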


The second proposition allows us to gain control when the forcing term
is in a dual Strichartz space.

\begin{proposition}\label{nonhom}
  Let $(\rho_1,p_1,q_1)$ and $(\rho_2,p_2,q_2)$ be Strichartz pairs.
  Let $\Omega$ be a domain such that $P$ satisfies both the localized
  energy estimates and the local Strichartz estimate with exponents
  $(\rho_1,p_1,q_1)$, $(\rho_2,p_2,q_2)$.  Then there is a parametrix
  $K$ for $P$ with
  \begin{equation}\label{nonhomest}
    \|\nabla Kf\|^2_{L^\infty L^2} + \|Kf\|^2_{LE} + \||D_x|^{-\rho_1}\nabla
    Kf\|^2_{L^{p_1}L^{q_1}}\lesssim \||D_x|^{\rho_2}f\|^2_{L^{p_2'}L^{q_2'}}
  \end{equation}
  and
  \begin{equation}\label{errorest}
    \|P Kf - f\|_{LE^*}\lesssim \||D_x|^{\rho_2}f\|_{L^{p_2'}L^{q_2'}}.
  \end{equation}
\end{proposition}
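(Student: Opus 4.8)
\textbf{Plan for Proposition \ref{nonhom}.}
The strategy is to build $K$ by splicing together a near-obstacle parametrix, obtained from the local Strichartz estimate via a Duhamel/time-slicing argument, and the far-away parametrix coming from \cite{globalw} for the boundaryless equation, using a cutoff $\chi$ as in the definition of $LE$. First I would construct the local piece: partition $\R$ into unit time intervals $I_m=[m,m+1]$, and on each $I_m$ solve the \emph{boundary} problem $Pv_m = f\1_{I_m}$ with zero Cauchy data at $t=m$, extended by the homogeneous flow for $t>m+1$. By the local Strichartz estimate \eqref{locStr} together with the energy bound, each $v_m$ satisfies $\|\nabla v_m\|_{L^\infty L^2} + \||D_x|^{-\rho_1}\nabla v_m\|_{L^{p_1}L^{q_1}([m,m+1]\times\ext)} \lesssim \||D_x|^{\rho_2}f\|_{L^{p_2'}L^{q_2'}(I_m\times\ext)}$. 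Summing the $v_m$ with the Duhamel formula gives a genuine solution $K_0 f$ of $PK_0f=f$ with $u|_\bdy=0$; the localized energy estimate then upgrades the $L^\infty L^2$ and $LE$ control, and the $l^{p_2'}\hookrightarrow l^{p_1}$ summation (valid since $p_1\ge p_2'$ for Strichartz pairs, using $L^\infty L^2$ control on the pieces for the local-in-time Strichartz norm) yields the global $\||D_x|^{-\rho_1}\nabla K_0f\|_{L^{p_1}L^{q_1}}$ bound. For this near-obstacle parametrix there is no error, so \eqref{errorest} would be trivially satisfied were we to take $K=K_0$ — but \eqref{nonhomest} fails at spatial infinity because the local-in-time Strichartz norm does not sum to a global one without decay, which is exactly where \cite{globalw} enters.

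So instead I would split $f = \chi f + (1-\chi)f$. For $\chi f$, which is spatially compactly supported, the construction above does give \eqref{nonhomest} and $PK_0(\chi f) = \chi f$ exactly. For $(1-\chi)f$, I would invoke the boundaryless parametrix from \cite{globalw}: there is $K_\infty$ with $\|\nabla K_\infty g\|_{L^\infty L^2} + \|K_\infty g\|_{LE_\infty} + \||D_x|^{-\rho_1}\nabla K_\infty g\|_{L^{p_1}L^{q_1}} \lesssim \||D_x|^{\rho_2}g\|_{L^{p_2'}L^{q_2'}}$ and $\|PK_\infty g - g\|_{Y} \lesssim \||D_x|^{\rho_2}g\|_{L^{p_2'}L^{q_2'}}$ (this is the $s=0$ Strichartz parametrix alluded to in the introduction, whose errors are small in the localized-energy/$Y$ spaces). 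I would then set
\[
Kf = K_0(\chi f) + (1-\tilde\chi)\, K_\infty((1-\chi)f),
\]
where $\tilde\chi$ is a cutoff that is $1$ on $\supp\chi$ and supported slightly outside it, so that $(1-\tilde\chi)K_\infty g$ vanishes near $\bdy$ and hence is an admissible element of the exterior problem. The bound \eqref{nonhomest} for $K$ then follows by adding the two pieces and using the Hardy inequalities of Proposition~\ref{phardy} to pass between $\|\nabla(\cdot)\|_X$ and the $LE$ norm for the $(1-\tilde\chi)K_\infty$ term, exactly as in the proof of Theorem~\ref{l3}.

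For the error \eqref{errorest} I would compute $PKf - f = [\,PK_0(\chi f)-\chi f\,] + [\,(1-\tilde\chi)PK_\infty((1-\chi)f) - (1-\chi)f\,] - [P,\tilde\chi]K_\infty((1-\chi)f)$. The first bracket vanishes. The second is $(1-\tilde\chi)(PK_\infty g - g)$ since $(1-\tilde\chi)(1-\chi) = (1-\tilde\chi)$ once $\tilde\chi\ge\chi$, and this is bounded in $LE^*$ by $\|PK_\infty g - g\|_Y \lesssim \||D_x|^{\rho_2}f\|_{L^{p_2'}L^{q_2'}}$ using $Y\hookrightarrow (1-\tilde\chi)LE^*$. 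The commutator term $[P,\tilde\chi]K_\infty g$ has compact spatial support in the annulus where $\nabla\tilde\chi$ lives, so its $LE^*$ norm is controlled by $\|K_\infty g\|$ on that annulus, hence by $\|K_\infty g\|_{LE_\infty}\lesssim \||D_x|^{\rho_2}f\|_{L^{p_2'}L^{q_2'}}$ via \eqref{Hardy}. The main obstacle I anticipate is the time-slicing summation for the near-obstacle parametrix: one must check that the local-in-time Strichartz norms over unit intervals sum up to the global norm with the correct Lebesgue exponent, which forces careful bookkeeping of the $L^\infty L^2$ control of the homogeneous tails $v_m$ for $t>m+1$ (needed because $p_1$ may be strictly larger than $p_2'$), together with an application of the localized energy estimate to absorb the accumulated low-frequency/energy contributions — this is the step where the hypothesis that $P$ satisfies \emph{both} the local Strichartz and the localized energy estimates is genuinely used, and it is also where one must be slightly careful in dimension $n=3$ because of the weaker form of $LE^*$.
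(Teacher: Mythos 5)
Your overall architecture (split $f=\chi f+(1-\chi)f$, time-slice the near piece using the local Strichartz estimate, use the boundaryless theory of \cite{globalw} for the far piece, glue with cutoffs) matches the paper's. But your treatment of the near-obstacle piece has a genuine gap. You construct $K_0(\chi f)$ as the \emph{exact} solution by summing the pieces $v_m$ together with their homogeneous tails for $t>m+1$. Those tails cannot be summed: at a fixed later time $N$ the solution is $\sum_{m\le N} v_m(t)$, and each tail is controlled only in energy, so estimating its global Strichartz norm requires an $\ell^1_m$ sum of $\|\beta(t-m)|D_x|^{\rho_2}f_1\|_{L^{p_2'}L^{q_2'}}$, which is not dominated by the $\ell^{p_2'}_m$ sum, i.e.\ by $\||D_x|^{\rho_2}f_1\|_{L^{p_2'}L^{q_2'}}$. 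Nor can you invoke the localized energy estimate to rescue the exact solution, because $\chi f\in L^{p_2'}L^{q_2'}$ with $p_2'\le 2$ need not lie in $LE^*$ (which demands $L^2$ in time). You flag this as ``the main obstacle I anticipate'' but do not resolve it. The resolution in the paper is to give up exactness precisely here: one sets $\phi_j=\tilde\beta(t-j,r)\psi_j$ with $\tilde\beta$ a \emph{space-time} cutoff supported in $E_j=[j-2,j+2]\times(\{|x|<2\}\cap\ext)$, so the tails are discarded, the $\phi_j$ have finite overlap in time (hence square-sum to the global Strichartz norm), and the price is the error $[P,\tilde\beta]\psi_j$ — which is compactly supported and $L^2$-bounded by the local estimates, hence square-sums into $LE^*$, exactly the space where Theorem~\ref{globalStrichartz}'s bootstrap via the localized energy estimate can absorb it. In other words, \eqref{errorest} is not ``trivially satisfied'' for the near piece; the whole point of the proposition being about a parametrix rather than a solution is that the near piece must carry a nontrivial but $LE^*$-controlled error.

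There is a secondary issue in $n=3$ for the far piece. You propose to control $\|(1-\tilde\chi)K_\infty g\|_{LE}$ from $\|\nabla K_\infty g\|_X$ via Proposition~\ref{phardy}, ``as in the proof of Theorem~\ref{l3}.'' But the $LE$ norm contains the component $2^{-3j/2}\|\cdot\|_{L^2(A_j)}$ of $LE_0$, and the Hardy inequality \eqref{hardy.ge4} that would supply it fails in dimension three; in the proof of Theorem~\ref{l3} that component comes from Proposition~\ref{prop.kss}, which is unavailable here because the forcing is not in $LE_0^*$. The paper handles this by an extra argument: writing the forward solution via the fundamental solution, reducing to the non-retarded operator by the Christ--Kiselev lemma (using $p_2'<2$), and then applying the Morawetz bound \eqref{main.kss} with $\Omega=\emptyset$ to the resulting homogeneous solution, whose energy is controlled by \eqref{backref}. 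Relatedly, your error term $(1-\tilde\chi)(PK_\infty g-g)$ is bounded only in $Y$, which does not embed into $LE^*=LE_0^*$ when $n=3$; the paper avoids this by taking the exact boundaryless solution for the far piece, so that the only far-field error is the compactly supported commutator $[P,\tilde\chi]\psi$.
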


We briefly delay the proofs and first apply the propositions to prove
Theorem \ref{globalStrichartz}.

\begin{proof}[Proof of Theorem \ref{globalStrichartz}]
For 
\[Pu=f+g,\quad f\in |D_x|^{-\rho_2}L^{p_2'}L^{q_2'},\, g\in LE^*,\]
we write
\[u=Kf+v.\]
The bound for $\nabla Kf$ follows immediately from \eqref{nonhomest}.

To bound $v$, we note that
\[P v = (1-PK)f+g.\]
Applying \eqref{homest} and the localized energy estimate,  we have
\[\||D_x|^{-\rho_1}\nabla v\|_{L^{p_1}L^{q_1}}\lesssim \|\nabla
u(0)\|_{L^2} + \|\nabla Kf\|_{L^\infty L^2} + \|(1-PK)f\|_{LE^*} + \|g\|_{LE^*}.\]
The Strichartz estimates \eqref{glStr} then
follow from \eqref{nonhomest} and \eqref{errorest}.  
\end{proof}

\begin{proof}[Proof of Proposition \ref{hom}]
We assume $P \phi\in Y$, and we write
\[\phi = \chi \phi + (1-\chi)\phi\]
with $\chi$ as in the definition of the $LE$ norm.
Since, using \eqref{reverseHardy}, the fundamental theorem of calculus, and
\eqref{Hardy}, we have
\[ \|[P, \chi] \phi\|_{LE^*}\lesssim \|\phi\|_{LE},\]
it suffices to show the estimate for $\phi_1=\chi \phi$,
$\phi_2=(1-\chi)\phi$ separately.

To show \eqref{homest} for $\phi_1$, we need only assume that $\phi_1$
and $P\phi_1$ are compactly supported, and we write
\[\phi_1 = \sum_{j\in \Z} \beta(t-j)\phi_1\]
for an appropriately chosen, smooth, compactly supported function
$\beta$.  By commuting $P$ and $\beta(t-j)$, we easily obtain
\[ \sum_{j\in \N} \|\beta(t-j)\phi_1\|^2_{LE} 
+ \|P
(\beta(t-j)\phi_1)\|^2_{L^1L^2}
\lesssim \|\phi_1\|^2_{LE}+ \|P \phi_1\|^2_{LE^*}.\]
Here, as above, we have also used \eqref{reverseHardy}, the
fundamental theorem of calculus, and \eqref{Hardy}.  Applying the
homogeneous local Strichartz estimate to each piece $\beta(t-j)\phi_1$
and using Duhamel's formula, the bound \eqref{homest} for $\phi_1$ follows
immediately from the square summability above.

On the other hand, $\phi_2$ solves a boundaryless equation, and the
estimate \eqref{homest} is just a restatement of \cite[Theorem 7]{globalw} with $s=0$.
This follows directly when $n\ge 4$ and easily from
\eqref{reverseHardy} when $n=3$.
\end{proof}

\begin{proof}[Proof of Proposition \ref{nonhom}]
  We split $f$ in a fashion similar to the above:
\[ f = \chi f + (1-\chi) f = f_1 + f_2.\]

For $f_1$, we write
\[f_1 = \sum_j \beta(t-j) f_1\]
where $\beta$ is supported in $[-1,1]$.  Let $\psi_j$ be the solution
to
\[P \psi_j = \beta(t-j)f_1.\]
By the local Strichartz estimate, we have
\[ \||D_x|^{-\rho_1} \nabla\psi_j\|_{L^{p_1}L^{q_1}(E_j)} + \|\nabla
\psi_j\|_{L^\infty L^2(E_j)} \lesssim \|\beta(t-j)|D_x|^{\rho_2}f_1\|_{L^{p_2'}L^{q_2'}}\]
where $E_j = [j-2,j+2]\times (\{|x|<2\}\cap \ext)$.  Letting
$\tilde{\beta}(t-j,r)$ be a cutoff which is supported in $E_j$ and is
identically one on the support of $\beta(t-j)\chi$, set $\phi_j = \tilde{\beta}(t-j,r)\psi_j$.
Then,
\begin{equation}\label{locnonhomest}
\||D_x|^{-\rho_1} \nabla \phi_j\|_{L^{p_1}L^{q_1}} + \|\nabla
\phi_j\|_{L^\infty L^2}\lesssim
\|\beta(t-j)|D_x|^{\rho_2} f_1\|_{L^{p_2'}L^{q_2'}}.\end{equation}
Moreover,
\[ P \phi_j - \beta(t-j)f_1 = [P,\tilde{\beta}(t-j,r)]\psi_j,\]
and thus,
\begin{equation}\label{locerror}
\|P \phi_j - \beta(t-j)f_1\|_{L^2}\lesssim \|\beta(t-j)|D_x|^{-\rho_2}f_1\|_{L^{p_2'}L^{q_2'}}.
\end{equation}
Setting 
\[Kf_1 = \sum_j \phi_j\]
and summing the bounds \eqref{locnonhomest} and \eqref{locerror}
yields the desired result for $f_1$.

For $f_2$, we solve the boundaryless equation
\[P \psi = f_2.\]
For a second cutoff $\tilde{\chi}$ which is 1 on the support of
$1-\chi$ and vanishes for $\{r<R\}$, we set
\[Kf_2=\tilde{\chi}\psi.\]
The following lemma, which is in essence from \cite[Theorem
  6]{globalw}, applied to $\psi$ then easily yields the desired bounds.

\begin{lemma}
  Let $f\in |D_x|^{-\rho_2}L^{p_2'}L^{q_2'}$.  Then the forward solution
  $\psi$ to the boundaryless equation $P\psi = f$ satisfies the bound
  \begin{equation}
    \|\nabla \psi\|^2_{L^\infty L^2}+\|\psi\|^2_{LE} + \||D_x|^{-\rho_1}\nabla
    \psi\|^2_{L^{p_1}L^{q_1}} \lesssim \||D_x|^{\rho_2}f\|^2_{L^{p_2'}L^{q_2'}}.
  \end{equation}
\end{lemma}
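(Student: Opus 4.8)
The plan is to recognize this lemma as the boundaryless, global-in-time Strichartz estimate for $P$ with inhomogeneous term in a dual Strichartz space, i.e. the $s=0$ case of \cite[Theorem 6]{globalw}, and to assemble it from the two facts already available for the boundaryless operator: the localized energy estimate of Proposition \ref{glw}, and the existence of a good outgoing parametrix built from a time-dependent FBI transform as in \cite{globalw} (cf. \cite{gS}). A parametrix is genuinely needed here: a dual Strichartz norm does not embed into $L^1L^2+Y$, so Proposition \ref{glw} cannot be applied to $f$ directly, and the homogeneous Strichartz estimate alone does not see the forcing.

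First I would recall from \cite{globalw} the outgoing parametrix $K_\infty$ for $P$ on $\R\times\R^n$, obtained by conjugating the outgoing half-wave propagators by an FBI transform; this conjugation produces a degenerate parabolic equation whose solution operator is controlled by the maximum-principle bounds of \cite{gS}. Its relevant properties are that it reproduces the flat Strichartz and energy behaviour,
\[
\|\nabla K_\infty f\|_{L^\infty L^2} + \|\nabla K_\infty f\|_{X} + \||D_x|^{-\rho_1}\nabla K_\infty f\|_{L^{p_1}L^{q_1}} \lesssim \||D_x|^{\rho_2}f\|_{L^{p_2'}L^{q_2'}},
\]
while the error it leaves is small in the dual localized energy space,
\[
\|P K_\infty f - f\|_{Y} \lesssim \||D_x|^{\rho_2}f\|_{L^{p_2'}L^{q_2'}}.
\]
Then I would set $\psi_1 = \psi - K_\infty f$, the forward solution of $P\psi_1 = g := f - P K_\infty f$ with zero Cauchy data and $\|g\|_{Y}\lesssim \||D_x|^{\rho_2}f\|_{L^{p_2'}L^{q_2'}}$, and apply Proposition \ref{glw} to obtain
\[
\|\nabla\psi_1\|_{L^\infty L^2\cap X}\lesssim \|g\|_{L^1L^2+Y}\lesssim \|g\|_{Y}\lesssim \||D_x|^{\rho_2}f\|_{L^{p_2'}L^{q_2'}}.
\]

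Next I would upgrade $\psi_1$ to a Strichartz bound exactly as in the proof of Proposition \ref{hom}: decompose in time, $\psi_1 = \sum_j \beta(t-j)\psi_1$, commute $P$ past $\beta(t-j)$, pass between the $L^1L^2$-type and $X/Y$-type norms using \eqref{reverseHardy}, the fundamental theorem of calculus and \eqref{Hardy}, and apply the homogeneous boundaryless Strichartz estimate to each piece together with Duhamel; equivalently one may quote \cite[Theorem 7]{globalw} with $s=0$. This yields $\||D_x|^{-\rho_1}\nabla\psi_1\|_{L^{p_1}L^{q_1}}\lesssim \|\nabla\psi_1(0)\|_{L^2} + \|\nabla\psi_1\|_{X} + \|g\|_{Y}$, which is controlled by the previous step. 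Adding the bounds for $K_\infty f$ and for $\psi_1$ gives the three terms on the left of the lemma; the $\|\psi\|_{LE}$ term follows from the preceding bounds together with the Hardy inequalities of Proposition \ref{phardy} (the dimension-three case requiring, as in the definition of $LE^*$ and the proof of Theorem \ref{l3}, that one not demand more local $L^2$ control than the spaces supply).

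The genuinely hard analytic input — the construction of $K_\infty$ and the verification of its mapping properties, in particular that $P K_\infty f - f$ actually lands in $Y$ with small norm, which is exactly where the smallness of $\e$ and its linear appearance in \eqref{coeff} are used — is not reproved here but cited from \cite{globalw}. Within the present argument the only real work is the bookkeeping reducing to Proposition \ref{glw} and the time-slicing of the third step, which is identical to that used for Proposition \ref{hom} and must be carried out with the same care concerning the commutators $[P,\beta(t-j)]$ and the Hardy inequalities; I expect that bookkeeping, rather than any new estimate, to be the point to get right.
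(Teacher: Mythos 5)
There is a genuine gap, and it sits exactly where the paper's proof does its only real work. Your plan reconstructs the $X$-bound on $\nabla\psi$ and the Strichartz bound $\||D_x|^{-\rho_1}\nabla\psi\|_{L^{p_1}L^{q_1}}$ from the parametrix of \cite{globalw} plus Proposition~\ref{glw} and time-slicing; that is fine, but it essentially re-derives \cite[Theorem 6]{globalw}, which the paper simply cites as \eqref{backref}. What your argument does not deliver is the weighted local $L^2$ bound on the solution itself, $2^{-3j/2}\|\psi\|_{L^2(A_j)}\lesssim \||D_x|^{\rho_2}f\|_{L^{p_2'}L^{q_2'}}$, in dimension $n=3$. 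Your closing parenthetical — that in three dimensions one should ``not demand more local $L^2$ control than the spaces supply'' — misreads where the dimensional adjustment is made: the paper weakens the \emph{forcing-side} space $LE^*$ when $n=3$, but the \emph{solution-side} norm $\|\psi\|_{LE}\geq\|\psi\|_{LE_0}$ contains $2^{-3j/2}\|\psi\|_{L^2(A_j)}$ in every dimension, and the lemma asserts exactly that bound. For $n\geq 4$ it follows from $\|\nabla\psi\|_X$ via \eqref{hardy.ge4}, but that Hardy inequality fails when $n=3$, so gradient control alone cannot close the estimate.

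The missing idea is the paper's Christ--Kiselev argument. Writing $\psi(t)=\int_{-\infty}^t H(t,s)f(s)\,ds$ with $H$ the forward fundamental solution, and using that $p_2'<2$ for Strichartz pairs in $n=3$, the Christ--Kiselev lemma reduces the retarded bound to the non-retarded one, i.e.\ to bounding $\psi_1(t)=\int_{-\infty}^{\infty}H(t,s)f(s)\,ds$. This $\psi_1$ solves $P\psi_1=0$, its energy $\|\nabla\psi_1\|_{L^\infty L^2}$ is controlled by $\||D_x|^{\rho_2}f\|_{L^{p_2'}L^{q_2'}}$ via \eqref{backref}, and then the Morawetz-type estimate \eqref{main.kss} applied with $\Omega=\emptyset$ converts energy control of a homogeneous solution into the weighted $L^2$ bound $2^{-3j/2}\|\psi_1\|_{L^2(A_j)}\lesssim\|\nabla\psi_1(0)\|_{L^2}$. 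Without this step (or a substitute for it) your proof establishes every term on the left of the lemma except the local $L^2$ piece of $\|\psi\|_{LE}$ when $n=3$, which is precisely the term the subsequent exterior-domain argument needs.
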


It remains to prove the lemma.  From \cite[Theorem 6]{globalw}, we
have that
\begin{equation}
  \label{backref}
\|\nabla \psi\|^2_X + \||D_x|^{-\rho_1}\nabla
\psi\|^2_{L^{p_1}L^{q_1}} \lesssim \||D_x|^{\rho_2}f\|^2_{L^{p_2'}L^{q_2'}}.
\end{equation}
By \eqref{Hardy} we have
\[
\sup_{j\ge 0} 2^{-j/2} \|\nabla
 \psi\|_{L^2(A_j)}
\lesssim \|\nabla \psi\|_X.
\]
It remains only to show the uniform bound
\begin{equation}
  \label{ll2}
2^{-\frac{3j}2}\|\psi\|_{L^2(A_j)}\lesssim \||D_x|^{\rho_2} f\|_{L^{p_2'}L^{q_2'}}
\end{equation}
when $n=3$.
Let $H(t,s)$ be the forward fundamental solution to $P$. Then 
\[
\psi(t) = \int_{-\infty}^t  H(t,s)f(s) \ ds.
\]
Therefore \eqref{ll2} can be rewritten as
\[
2^{-\frac{3j}2}\left\| \int_{-\infty}^t H(t,s)f(s) \ ds
\right\|_{L^2(A_j)}\lesssim \||D_x|^{\rho_2} f\|_{L^{p_2'}L^{q_2'}}.
\]
 Since $p_2'<2$ for
Strichartz pairs in $n=3$, by the Christ-Kiselev lemma
\cite{christkiselev} (see also \cite{smithsogge}) it suffices to show that 
\begin{equation}
  \label{ll2a}
2^{-\frac{3j}{2}}\Bigl\|\int_{-\infty}^\infty H(t,s)f(s)\:ds\Bigr\|_{L^2(A_j)}\lesssim
\||D_x|^{\rho_2}f\|_{L^{p_2'}L^{q_2'}}.
\end{equation}
The function
\[
\psi_1(t)  = \int_{-\infty}^\infty H(t,s)f(s)\:ds
\]
solves $P \psi_1 = 0$, and from \eqref{backref} we have
\[
\|\nabla \psi_1\|_{L^\infty L^2} \lesssim \||D_x|^{\rho_2}
f\|_{L^{p_2'}L^{q_2'}}.
\]
On the other hand, from \eqref{main.kss} with $P\psi_1 = 0$ and $\Omega=\emptyset$, we
obtain
\[
2^{-\frac{3j}2}\|\psi_1\|_{L^2(A_j)}
\lesssim \|\nabla \psi_1(0)\|^2_2.
\]
Hence \eqref{ll2a} follows, and the proof is concluded.

\end{proof}

\bibliography{nls}

\begin{thebibliography}{10}

\bibitem{Alinhac}
Serge Alinhac.
\newblock On the {M}orawetz--{K}eel-{S}mith-{S}ogge inequality for the wave
  equation on a curved background.
\newblock {\em Publ. Res. Inst. Math. Sci.}, 42(3):705--720, 2006.

\bibitem{bss}
Matthew Blair, Hart~F. Smith, and Christopher~D. Sogge.
\newblock Strichartz estimates for the wave equation on manifolds with
  boundary.
\newblock preprint.

\bibitem{BurqGlobal}
N.~Burq.
\newblock Global {S}trichartz estimates for nontrapping geometries: about an
  article by {H}. {F}.\ {S}mith and {C}. {D}.\ {S}ogge: ``{G}lobal {S}trichartz
  estimates for nontrapping perturbations of the {L}aplacian'' [{C}omm.
  {P}artial {D}ifferential {E}quation {\bf 25} (2000), no. 11-12 2171--2183;
  {MR}1789924 (2001j:35180)].
\newblock {\em Comm. Partial Differential Equations}, 28(9-10):1675--1683,
  2003.

\bibitem{BLP}
Nicolas Burq, Gilles Lebeau, and Fabrice Planchon.
\newblock Global existence for energy critical waves in 3-{D} domains.
\newblock {\em J. Amer. Math. Soc.}, 21(3):831--845, 2008.

\bibitem{christkiselev}
Michael Christ and Alexander Kiselev.
\newblock Maximal functions associated to filtrations.
\newblock {\em J. Funct. Anal.}, 179(2):409--425, 2001.

\bibitem{hmssz}
Kunio Hidano, Jason Metcalfe, Hart~F. Smith, Christopher~D. Sogge, and Yi~Zhou.
\newblock On abstract strichartz estimates and the strauss conjecture for
  nontrapping obstacles.
\newblock preprint.

\bibitem{KSS}
Markus Keel, Hart~F. Smith, and Christopher~D. Sogge.
\newblock Almost global existence for some semilinear wave equations.
\newblock {\em J. Anal. Math.}, 87:265--279, 2002.
\newblock Dedicated to the memory of Thomas H.\ Wolff.

\bibitem{MR1646048}
Markus Keel and Terence Tao.
\newblock Endpoint {S}trichartz estimates.
\newblock {\em Amer. J. Math.}, 120(5):955--980, 1998.

\bibitem{MR1308623}
Carlos~E. Kenig, Gustavo Ponce, and Luis Vega.
\newblock On the {Z}akharov and {Z}akharov-{S}chulman systems.
\newblock {\em J. Funct. Anal.}, 127(1):204--234, 1995.

\bibitem{MMT}
Jeremy Marzuola, Jason Metcalfe, and Daniel Tataru.
\newblock Strichartz estimates and local smoothing estimates for
  asympototically flat {S}chr\"odinger equations.
\newblock {\em J. Funct. Anal.}, 255(6):1497--1553, 2008.

\bibitem{MetSo2}
Jason Metcalfe and Christopher~D. Sogge.
\newblock Hyperbolic trapped rays and global existence of quasilinear wave
  equations.
\newblock {\em Invent. Math.}, 159(1):75--117, 2005.

\bibitem{MetSo}
Jason Metcalfe and Christopher~D. Sogge.
\newblock Long-time existence of quasilinear wave equations exterior to
  star-shaped obstacles via energy methods.
\newblock {\em SIAM J. Math. Anal.}, 38(1):188--209 (electronic), 2006.

\bibitem{globalw}
Jason Metcalfe and Daniel Tataru.
\newblock Global parametrices and dispersive estimates for variable coefficient
  wave equations.
\newblock preprint.

\bibitem{MetGlobal}
Jason~L. Metcalfe.
\newblock Global {S}trichartz estimates for solutions to the wave equation
  exterior to a convex obstacle.
\newblock {\em Trans. Amer. Math. Soc.}, 356(12):4839--4855 (electronic), 2004.

\bibitem{MR1168960}
Gerd Mockenhaupt, Andreas Seeger, and Christopher~D. Sogge.
\newblock Local smoothing of {F}ourier integral operators and
  {C}arleson-{S}j\"olin estimates.
\newblock {\em J. Amer. Math. Soc.}, 6(1):65--130, 1993.

\bibitem{morawetz}
Cathleen~S. Morawetz.
\newblock Time decay for the nonlinear {K}lein-{G}ordon equations.
\newblock {\em Proc. Roy. Soc. Ser. A}, 306:291--296, 1968.

\bibitem{MR1644105}
Hart~F. Smith.
\newblock A parametrix construction for wave equations with {$C\sp {1,1}$}
  coefficients.
\newblock {\em Ann. Inst. Fourier (Grenoble)}, 48(3):797--835, 1998.

\bibitem{SS}
Hart~F. Smith and Christopher~D. Sogge.
\newblock On {S}trichartz and eigenfunction estimates for low regularity
  metrics.
\newblock {\em Math. Res. Lett.}, 1(6):729--737, 1994.

\bibitem{SmSoLocal}
Hart~F. Smith and Christopher~D. Sogge.
\newblock On the critical semilinear wave equation outside convex obstacles.
\newblock {\em J. Amer. Math. Soc.}, 8(4):879--916, 1995.

\bibitem{smithsogge}
Hart~F. Smith and Christopher~D. Sogge.
\newblock Global {S}trichartz estimates for nontrapping perturbations of the
  {L}aplacian.
\newblock {\em Comm. Partial Differential Equations}, 25(11-12):2171--2183,
  2000.

\bibitem{SmSo}
Hart~F. Smith and Christopher~D. Sogge.
\newblock On the {$L\sp p$} norm of spectral clusters for compact manifolds
  with boundary.
\newblock {\em Acta Math.}, 198(1):107--153, 2007.

\bibitem{MR1909638}
Hart~F. Smith and Daniel Tataru.
\newblock Sharp counterexamples for {S}trichartz estimates for low regularity
  metrics.
\newblock {\em Math. Res. Lett.}, 9(2-3):199--204, 2002.

\bibitem{MR2128434}
Jacob Sterbenz.
\newblock Angular regularity and {S}trichartz estimates for the wave equation.
\newblock {\em Int. Math. Res. Not.}, (4):187--231, 2005.
\newblock With an appendix by Igor Rodnianski.

\bibitem{Strauss}
Walter~A. Strauss.
\newblock Dispersal of waves vanishing on the boundary of an exterior domain.
\newblock {\em Comm. Pure Appl. Math.}, 28:265--278, 1975.

\bibitem{Str}
Robert~S. Strichartz.
\newblock Restrictions of {F}ourier transforms to quadratic surfaces and decay
  of solutions of wave equations.
\newblock {\em Duke Math. J.}, 44(3):705--714, 1977.

\bibitem{nlw}
Daniel Tataru.
\newblock Strichartz estimates for operators with nonsmooth coefficients and
  the nonlinear wave equation.
\newblock {\em Amer. J. Math.}, 122(2):349--376, 2000.

\bibitem{cs}
Daniel Tataru.
\newblock Strichartz estimates for second order hyperbolic operators with
  nonsmooth coefficients. {I}{I}.
\newblock {\em Amer. J. Math.}, 123(3):385--423, 2001.

\bibitem{lp}
Daniel Tataru.
\newblock Strichartz estimates for second order hyperbolic operators with
  nonsmooth coefficients. {III}.
\newblock {\em J. Amer. Math. Soc.}, 15(2):419--442 (electronic), 2002.

\bibitem{gS}
Daniel Tataru.
\newblock Parametrices and dispersive estimates for {S}chr\"odinger operators
  with variable coefficients.
\newblock {\em Amer. J. Math.}, 130(3):571--634, 2008.

\end{thebibliography}

\end{document}